\numberwithin{equation}{section}
\numberwithin{figure}{section}
\numberwithin{table}{section}
\long\def\MSC#1\EndMSC{\def\arg{#1}\ifx\arg\empty\relax\else
{\narrower\noindent%
{2010 Mathematics Subject Classification}: #1\\} \fi}
\long\def\PACS#1\EndPACS{\def\arg{#1}\ifx\arg\empty\relax\else
{\narrower\noindent%
{PACS numbers}: #1}\fi}
\long\def\KEY#1\EndKEY{\def\arg{#1}\ifx\arg\empty\relax\else
{\narrower\noindent%
Keywords: #1\\}\fi}
\theoremstyle{plain}
\newtheorem{theorem}{Theorem}[section]
\newtheorem{lemma}[theorem]{Lemma}
\newtheorem{proposition}[theorem]{Proposition}
\newtheorem{corollary}[theorem]{Corollary}
\theoremstyle{definition}
\newtheorem{definition}[theorem]{Definition}
\newtheorem{example}[theorem]{Example}
\theoremstyle{remark}
\newtheorem{remark}[theorem]{Remark}
\newcommand{\R}{\mathbb{R}}
\newcommand{\Z}{\mathbb{Z}}
\newcommand{\N}{\mathbb{N}}
\newcommand{\C}{\mathbb{C}}
\newcommand{\norm}[1]{\lVert#1\rVert}
\newcommand{\abs}[1]{\lvert#1\rvert} 
\newcommand{\inner}[1]{\langle#1\rangle}
\newcommand{\redel}{\mathop{\textup{Re}}}
\newcommand{\mspan}{\mathop{\textup{span}}}
\newcommand{\ident}{\mathop{\textup{id}}}
\newcommand{\suppm}{\mathop{\textup{supp}}}
\newcommand{\I}{\mathrm{i}}    
\newcommand{\e}{\mathrm{e}}    
\newcommand{\di}{\mathrm{d}}   
\begin{document}

\title[Mimicking continuum measurements by electrode data]{Mimicking relative continuum measurements by electrode data in two-dimensional electrical impedance tomography}

\author[H.~Garde]{Henrik Garde}
\address[H.~Garde]{Department of Mathematics, Aarhus University, Ny Munkegade 118, 8000 Aarhus C, Denmark.}
\email{garde@math.au.dk}

\author[N.~Hyv\"onen]{Nuutti Hyv\"onen}
\address[N.~Hyv\"onen]{Department of Mathematics and Systems Analysis, Aalto University, P.O. Box~11100, 00076 Helsinki, Finland.}
\email{nuutti.hyvonen@aalto.fi}

\begin{abstract}
  This paper introduces a constructive method for approximating relative continuum measurements in two-dimensional electrical impedance tomography based on data originating from either the point electrode model or the complete electrode model. The upper bounds for the corresponding approximation errors explicitly depend on the number (and size) of the employed electrodes as well as on the regularity of the continuum current that is mimicked. In particular, if the input current and the object boundary are infinitely smooth, the discrepancy associated with the point electrode model converges to zero faster than any negative power of the number of electrodes. The results are first proven for the unit disk via trigonometric interpolation and quadrature rules, and they are subsequently extended to more general domains with the help of conformal mappings.
\end{abstract}
	
\maketitle

\KEY
electrical impedance tomography, 
electrode placement,
interpolation,
quadrature rule
\EndKEY

\MSC
35R30, 35A35, 42A15, 42A10
\EndMSC

\section{Introduction}

In \emph{electrical impedance tomography} (EIT) the goal is to reconstruct an unknown conductivity distribution inside a physical body. This is based on noninvasive measurements of electric current and voltage at electrodes placed on the boundary of the object.  According to the idealized \emph{continuum model} (CM) of EIT, such measurements correspond to knowing the infinite-dimensional \emph{Neumann-to-Dirichlet} (ND) boundary map for the conductivity equation that models the behavior of the electric potential inside the object of interest. Alternatively, if one is able to perform the measurements of EIT for the same object with two different conductivities, the CM assumes the available data is the difference of the respective ND maps, i.e.~the so-called {\em relative measurements}. This is the case with time-difference imaging or by numerically simulating the reference measurements for, say, the unit conductivity. Quite often the relative measurements are supposed to originate from two conductivities that coincide in some interior neighborhood of the object boundary, which is also the setting considered in this work.

In the framework of the CM, an isotropic conductivity, i.e.~the strictly positive scalar-valued coefficient in the conductivity equation, is uniquely identified by the corresponding boundary measurements under dimension-dependent regularity assumptions. In two dimensions, it is enough to assume the conductivity is essentially bounded \cite{Astala2006a}, whereas the most general uniqueness results in higher dimension require Lipschitz continuity \cite{CaroRogers2016}. There also exist several reconstruction methods that are based on the (slightly unrealistic) CM such as the $\overline{\partial}$-method \cite{Knudsen2007,Knudsen2009,Nachman1996}, the monotonicity method \cite{Garde_2019b,Harrach10,Harrach13,Candiani2019}, the factorization method \cite{Kirsch2008,Bruhl2000,Bruhl2001,Hanke2015,Gebauer2007}, and the enclosure method \cite{Ikehata1998a,Ikehata1999a,Ikehata2000c}. It is worth noting that all these reconstruction techniques actually assume the availability of {\em relative} CM measurements.  

In all real-world settings for EIT, it is only possible to perform measurements with a finite number of finite-sized electrodes, making it impossible to exactly record continuum data in practice. In consequence, more accurate electrode models should arguably be used to treat such realistic measurement setups. The \emph{complete electrode model} (CEM) includes the actual shapes and positions of the employed electrodes as well as the contact impedances at the electrode-object interfaces in the forward model~\cite{Cheng1989,Somersalo1992}. The limit of the CEM when the contact impedances tend to zero is called the {\em shunt model} \cite{Cheng1989}; a bit counterintuitively, the shunt model has been shown to exhibit worse numerical behavior than the CEM \cite{Darde2016}. For small or point-like electrodes and relative measurements, the injected currents can be relatively accurately modeled as Dirac delta distributions according to the \emph{point electrode model} (PEM) \cite{Hanke2011b}.

Although the aforementioned electrode models accurately predict real-world measurements under appropriate assumptions on the measurement configuration, very little is actually known about the unique identifiability of conductivities in their frameworks: among the only such results, the unique identifiability has been proven in \cite{Harrach_2019} for the CEM assuming the conductivity belongs to a suitable a priori known finite-dimensional set of piecewise analytic functions. In fact, it is intuitively acceptable to assume there is no uniqueness for any combination of an {\em infinite-dimensional} family of conductivities and an electrode model producing {\em finite-dimensional} data. For the PEM it is even possible to stably construct conductivity perturbations that really are invisible to measurements by a given finite set of electrodes \cite{Chesnel_2015}. However, since it is also possible to approximate (absolute) CM measurements by those of the CEM when the number of electrodes tends to infinity and the electrodes cover the object boundary in a controlled manner, one can argue that the unique identifiability results for the CM transfer to the framework of the CEM in the sense of limits. To be slightly more precise, it has been shown in~\cite{Hyvonen09} that when the number of electrodes is increasing (and their size appropriately decreasing) the ND map corresponding to a smooth enough conductivity can be approximated in the space of bounded linear operators on square integrable functions, so that the associated discrepancy tends to zero essentially as the maximal distance between adjacent electrodes.

As mentioned above, many of the reconstruction methods designed for the CM assume relative measurements as their input, and so the ability to approximate relative continuum data by electrode models is important for the practical implementation of sophisticated mathematical reconstruction algorithms. In fact, electrode measurement variants have previously been introduced for, e.g., the factorization method, the monotonicity method, and the $\overline{\partial}$-method (see,~e.g.,~\cite{Lechleiter2008a,GardeStaboulis_2016,Garde_2019,Harrach15,Isaacson2004}), but the accuracy of the associated techniques for approximating relative continuum data based on the available electrode measurements has not always been carefully analyzed. Indeed, the best result in this direction the authors are aware of is the one in~\cite{Hyvonen09}, which does not, in particular, exploit the extra structure carried by relative measurements if the considered two conductivities coincide in an interior neighborhood of the object boundary. See also \cite{Calvetti2019} for a recent Bayesian approach for moving between electrode and continuum measurements in EIT. 

In this paper, we tackle mimicking relative continuum data by electrode measurements in two-dimensional EIT as a problem of (hardware) algorithm design: our aim is to choose (optimal) positions for the employed electrodes based on the shape of the imaged domain and the net electrode currents as functions of the continuum current pattern one would like to drive through the object boundary. We assume the examined bounded simply connected two-dimensional domain has a smooth enough boundary and the target continuum current patterns exhibit $L^2$-based Sobolev regularity of order $s>\tfrac{1}{2}$. Our algorithm leads to estimates of order $\mathcal{O}(M^{-s})$ in the number of electrodes $M$ for the discrepancy between the relative measurements of the CM and suitably postprocessed PEM or CEM data (provided the width of the electrodes decay appropriately in $M$ for the CEM). This result is first proven for the PEM in the unit disk with equiangular electrodes, and subsequently it is extended to more general domains with the help of conformal mappings. Finally, the required estimates for the CEM are obtained by resorting to the material in \cite{Hanke2011b}, where an approximative link between the CEM and the PEM is considered. In addition to the Riemann mapping theorem, the main ingredients for obtaining our estimates are sufficiently accurate interpolation and quadrature rules on the boundary of the unit disk for the relevant Sobolev spaces; in fact, the order of the obtained estimates is directly dictated by the accuracy of these rules.


This paper is organized as follows. In Section~\ref{sec:forward}, we introduce the CM, PEM, and CEM as well as their respective relative measurement maps. Section~\ref{sec:Sobolev} briefly recalls the connection between Sobolev spaces and Fourier series, while Section~\ref{sec:approxuniform} reviews trigonometric interpolation and introduces estimates related to pointwise current injection in the PEM. Section~\ref{sec:unit_disk} provides the desired estimates for the PEM in the unit disk as Theorem~\ref{thm:est1}. Finally, Section~\ref{sec:general_domain} introduces our algorithm with general domains for both the PEM and the CEM, with Theorem~\ref{thm:main} and Corollary~\ref{cor:main} serving as our main results. The paper is concluded by two numerical examples in Section~\ref{sec:numer}, verifying some of the proven convergence rates in simple geometries. Finally, an appendix clarifies how the constants appearing in the estimates of \cite{Hanke2011b} depend on the number of electrodes.

\subsection{Some notational remarks} 

Let $\mathscr{L}(X,Y)$ denote the space of bounded linear operators between Banach spaces $X$ and $Y$, with $\mathscr{L}(X) := \mathscr{L}(X,X)$.

For the sake of brevity, we use the following notation for finite summations with $M\in\mathbb{N}$:
\begin{equation*}
\sum_{\abs{n}\leq M} a_n := \sum_{n=-M}^M a_n, \qquad \sum_{0<\abs{n}\leq M} a_n := \sum_{n=-M}^{-1} a_n + \sum_{n=1}^{M} a_n.
\end{equation*}
An analogous notation is also used for sets that are indexed by integers.  For $M\in\mathbb{N}$, we systematically index the components of vectors in $\C^{2M+1}$ from $-M$ to $M$, i.e.~$a = [a_{-M}, \dots, a_{M}]^{\rm T} \in \C^{2M+1}$, and denote the mean free subspace of $\C^{2M+1}$ by $\C_\diamond^{2M+1}$.

We often use generic positive constants that may change from one estimate to the next. As an example, writing~$C(a,b,c)$ indicates that such a constant only depends on the parameters $a$, $b$, and $c$.

\section{On forward models of EIT} \label{sec:forward}

In this section we review three forward models for (two-dimensional) EIT. All of them correspond to the same underlying elliptic partial differential equation
\begin{equation} \label{eq:PDE}
	\nabla \cdot (\sigma \nabla u) = 0 \qquad \text{in } \Omega,
\end{equation}
where $\Omega \subset \C$ is a bounded simply connected domain with a $C^\infty$-boundary. The coefficient  $\sigma \in L^\infty(\Omega)$ is a complex-valued isotropic conductivity satisfying
\begin{equation} \label{eq:sigma_condition1}
	\suppm(\sigma - 1) \subset  \subset \Omega \qquad \text{and} \qquad \varsigma_{-} \leq \redel(\sigma) \leq \varsigma_{+}
\end{equation}
almost everywhere in $\Omega$ for some $\varsigma_{-},\varsigma_{+} \in \R_+$.

\begin{remark}
	The forward problems of CM and CEM would still be well defined if $\partial \Omega$ were Lipschitz and only the second condition of \eqref{eq:sigma_condition1} were satisfied (assuming the boundary current densities for the CM are regular enough). On the other hand, the PEM requires the first condition of \eqref{eq:sigma_condition1} and also some regularity from $\partial \Omega$, but piecewise $C^{1, \alpha}$-smooth boundary would actually be enough \cite{Seiskari2014}. Moreover, all three models could be introduced in three dimensions and for anisotropic conductivities as well. The reason for assuming two-dimensionality and extra regularity from $\sigma$ and $\partial \Omega$ is that they are required by our main results in their optimal form. On the other hand, anisotropic conductivities are excluded solely for the sake of notational convenience.
\end{remark}

\subsection{Continuum model} \label{sec:CM}

The CM assumes that one can drive any mean free normal current density through $\partial \Omega$ and measure the resulting boundary potential everywhere. In mathematical terms, \eqref{eq:PDE} is accompanied by the Neumann boundary condition
\begin{equation} \label{eq:Neumann}
	\frac{\partial u}{\partial \nu} = f  \qquad \text{on } \partial \Omega,  
\end{equation}
where $\nu$ is the exterior unit normal of $\partial \Omega$ and $f \in H^s_\diamond(\partial \Omega)$, with $H^s_\diamond(\partial \Omega)$ denoting the mean free subspace of the Sobolev space $H^s(\partial \Omega)$. To be more precise,
\begin{equation*}
	H^s_\diamond(\partial \Omega) := \{ v \in H^s_\diamond(\partial \Omega) \mid \inner{1,v}_{\partial \Omega}=0 \}, \qquad s \in \R,
\end{equation*}
where $\inner{\cdot, \cdot}_{\partial \Omega}: H^{-s}(\partial \Omega) \times H^s(\partial \Omega) \to \C$ denotes the sesquilinear dual evaluation between the associated Sobolev spaces.

According to the standard theory on elliptic boundary value problems \cite{Necas2012}, the combination of \eqref{eq:PDE} and \eqref{eq:Neumann} has a unique solution $u \in H^{{\min\{1, s+ 3/2\}}}(\Omega)/\C$. Moreover, as $u$ satisfies the Laplace equation in an interior neighborhood of $\partial \Omega$, its Dirichlet trace is well-defined in $H^{s + 1}(\partial \Omega)/\C$~\cite{Lions1972}. By choosing the ground level of potential appropriately, one can thus introduce the ND map
\begin{equation*} 
	\Lambda(\sigma): \begin{cases}
		f \mapsto u|_{\partial \Omega}, \\[1mm]
		H^s_\diamond(\partial \Omega) \to  H^{s+1}_\diamond(\partial \Omega),
	\end{cases}
\end{equation*}
whose boundedness for any $s$ is considered,~e.g.,~in \cite[Theorem~A.3]{Hanke2011}.

The relative ND map
\begin{equation} \label{eq:T}
	\Upsilon(\sigma) := \Lambda(\sigma) - \Lambda(1)
\end{equation}
exhibits considerably more regularity than $\Lambda(\sigma)$ itself. More precisely,
\begin{equation} \label{eq:Tbound}
	\norm{\Upsilon(\sigma)}_{\mathscr{L}(H_\diamond^{t}(\partial \Omega), H_\diamond^{s}(\partial \Omega))} \leq C(s,t, \sigma, \Omega)
\end{equation}
for all $t,s \in \R$. This well known result is presented in \cite[Theorem~A.3]{Hanke2011} for the unit disk, but the corresponding proof applies as such to any smooth and bounded domain $\Omega$ since the essential ingredient is the first condition in \eqref{eq:sigma_condition1}, not the shape of the domain.

According to the CM, relative EIT measurements for a conductivity satisfying \eqref{eq:sigma_condition1} produce the boundary operator $\Upsilon(\sigma)$ as the data. However, it is obvious that $\Upsilon(\sigma)$ cannot be precisely retrieved based on practical EIT measurement that employ a finite set of contact electrodes. This observation motivates the introduction of more realistic electrode models for EIT.

\begin{remark}
  The estimate \eqref{eq:Tbound} remains valid if the unit background conductivity in \eqref{eq:sigma_condition1} and \eqref{eq:T} is replaced by a smooth conductivity $\sigma_0 \in C^\infty(\overline{\Omega})$, whose real part is positive and bounded away from zero, and the left-hand side of \eqref{eq:Neumann} is weighted by $\sigma_0$ to properly model the corresponding boundary current density. As the smoothening property \eqref{eq:Tbound} is a main ingredient of our analysis, it is straightforward to check that all results presented below still hold within such a slightly more general framework (if the unit conductivity is replaced by $\sigma_0$ at every occurrence, including all Neumann boundary conditions).
  \end{remark}

\subsection{Point electrode model} \label{sec:PEM}

In our framework, the PEM assumes that $2M+1$ point-like electrodes are attached to $\partial \Omega$ at the distinct locations $y_{-M}, \dots, y_M \in \partial \Omega$ for some $M\in\mathbb{N}$. The net currents $I_{-M}, \dots, I_M \in \C$, with a zero-mean condition imposed, are driven through the electrodes and the relative potentials are measured at these same positions. In other words, the employed current patterns are of the form
\begin{equation} \label{eq:f_I}
	f_I := \sum_{\abs{m}\leq M} I_m  \delta_{y_m},\qquad I \in \C_\diamond^{2M+1},
\end{equation}
where $\delta_{y_m} \in H^{s}(\partial \Omega)$, $s<-\frac{1}{2}$, is the Dirac delta distribution supported at $y_m$ on the one-dimensional boundary $\partial \Omega$. Observe that $f_I \in H_\diamond^{s}(\partial \Omega)$, $s<-\frac{1}{2}$, for any $I\in\C_\diamond^{2 M+1}$.

Recalling the relative ND map from \eqref{eq:T}, the measurements of PEM can be modeled by the pointwise current-to-voltage map
\begin{equation} \label{eq:Upsilon}
	\Upsilon_{\rm PEM}(\sigma): \C_\diamond^{2M+1} \ni I \mapsto
	\begin{bmatrix}
		\Upsilon(\sigma) f_I (y_{-M}) \\
		\vdots \\
		\Upsilon(\sigma) f_I (y_M)
	\end{bmatrix}
	+ c \mathbf{1} \in \C_\diamond^{2M+1},
\end{equation}
where $\mathbf{1} = [1, \dots, 1]^{\rm T} \in \C^{2M+1}$ and the ground level of potential $c \in \C$ is chosen so that the relative potentials at the electrodes have zero mean. Due to \eqref{eq:Tbound} and the Sobolev embedding theorem, the definition \eqref{eq:Upsilon} is unambiguous.

According to the PEM, relative EIT measurements for a conductivity satisfying \eqref{eq:sigma_condition1} produce (a noisy version of) the finite-dimensional linear map $\Upsilon_{\rm PEM}(\sigma)$ as the data. Although the point-like electrodes of the PEM cannot completely accurately model the finite-sized ones used in practical EIT measurements, it has been shown that the discrepancy between relative measurements modeled by the CEM and the PEM behave asymptotically as $\mathcal{O}(d^2)$ in the maximal diameter $d>0$ of the electrodes \cite{Hanke2011b}.

\subsection{Complete electrode model} \label{sec:CEM}

The CEM is arguably the most accurate model for EIT \cite{Cheng1989,Somersalo1992}. In our two-dimensional setting, $2M+1$ mutually disjoint electrodes $E_{-M}, \dots, E_M \subset \partial \Omega$ are attached to the object boundary. They are identified with the nonempty open and connected  subsets of $\partial \Omega$ that they cover, and their midpoints with respect to the arclength of $\partial \Omega$ are $y_{-M}, \dots, y_M \in \partial \Omega$, respectively. The contact impedances at the electrode-object interfaces are modeled by the complex numbers $z_{-M}, \dots, z_M \in \C$ with positive real parts.

In the forward problem of the CEM, the conductivity equation \eqref{eq:PDE} is combined with the boundary conditions
\begin{alignat}{2}
	\frac{\partial u}{\partial \nu} &= 0 &\qquad &\text{on} \ \partial \Omega\setminus \smashoperator{\bigcup_{\abs{m} \leq M}} \overline{E_m}, \notag\\[1mm] 
u+z_m \dfrac{\partial u}{\partial \nu}&= U_m & &\text{on} \ E_m, \quad \abs{m}\leq M, \label{cemeqs}\\[3mm] 
\int_{E_m} \dfrac{\partial u}{\partial \nu} \, {\rm d} s &= I_m, & & \abs{m}\leq M, \notag
\end{alignat}
where $I \in \C_\diamond^{2M+1}$ models the net currents through the electrodes, $u \in H^1(\Omega)$ is the electric potential within $\Omega$, and $U \in \C_\diamond^{2M+1}$ carries the constant potentials at the electrodes. The combination of \eqref{eq:PDE} and \eqref{cemeqs} uniquely defines the interior-electrode potential pair $(u, U) \in H^1(\Omega) \oplus \C_\diamond^{2M+1}$ \cite{Somersalo1992}. Take note that forcing the electrode potential vector to belong to $\C_\diamond^{2M+1}$ corresponds to a particular choice for the ground level of potential.

The absolute measurements of the CEM are modeled by the electrode current-to-voltage map
\begin{equation*}
	R(\sigma): \C_\diamond^{2M+1} \ni I \mapsto U \in \C_\diamond^{2M+1},
\end{equation*}
and the corresponding relative measurement map is
\begin{equation} \label{eq:Upsilon_CEM}
\Upsilon_{\rm CEM}(\sigma) := R(\sigma) - R(1).
\end{equation}
According to the CEM, relative EIT measurements for a conductivity satisfying \eqref{eq:sigma_condition1} produce (a noisy version of) the finite-dimensional linear map $\Upsilon_{\rm CEM}(\sigma)$ as the data.

\section{On periodic distributions, periodic Sobolev spaces, and Fourier series} \label{sec:Sobolev}

This section reviews some basic facts on periodic distributions and Sobolev spaces based on \cite[Sections 5.2 and 5.3]{Saranen2002}. We present the theory in terms of $2\pi$-periodic functions and distributions rather than 1-periodic ones as in \cite{Saranen2002}, mainly because we will initially focus on the unit disk when considering EIT.

We denote by $\mathcal{D}(\mathbb{R}) := C_\textup{c}^\infty(\mathbb{R})$ the space of smooth functions with compact support and by $\mathcal{D}'(\mathbb{R})$ its dual space, i.e.\ the space of distributions on $\mathbb{R}$. The dual pairing between these spaces is denoted as $\inner{\cdot,\cdot}_\mathbb{R}$.
\begin{definition}
	A distribution $g\in \mathcal{D}'(\mathbb{R})$ is called $2\pi$-periodic if 
	\begin{equation*}
		\inner{g,\tau_k\phi}_\mathbb{R} = \inner{g,\phi}_\mathbb{R}, \qquad \forall \phi\in \mathcal{D}(\mathbb{R}), \enskip k\in\mathbb{Z},
	\end{equation*}
	where $\tau_k\phi(\theta) := \phi(\theta+2\pi k)$ for $\theta\in\mathbb{R}$. Moreover, the spaces of $2\pi$-periodic smooth functions and distributions are defined as
	\begin{align*}
		\mathcal{D}_\textup{per} &:= \{ \phi \in C^\infty(\mathbb{R}) \mid \phi \text{ is $2\pi$-periodic} \}, \\[1mm]
		\mathcal{D}'_\textup{per} &:= \{ g\in \mathcal{D}'(\mathbb{R}) \mid g \text{ is $2\pi$-periodic} \},
	\end{align*}
	respectively.
\end{definition}
As hinted by the notation, $\mathcal{D}'_\textup{per}$ can be identified with the dual space of $\mathcal{D}_\textup{per}$: Following \cite[Section~5.2]{Saranen2002}, we may introduce $\psi\in \mathcal{D}(\mathbb{R})$ such that its $2\pi$-translates form a partition of unity, i.e.\
\begin{equation}
	\sum_{k\in\mathbb{Z}} \psi(\theta + 2\pi k) \equiv 1,\qquad \theta\in\mathbb{R}. \label{eq:partitionofunity}
\end{equation}
The dual pairing between $\mathcal{D}'_\textup{per}$ and $\mathcal{D}_\textup{per}$ is then defined by
\begin{equation}
	\inner{g,\phi} := \inner{g,\psi\phi}_\mathbb{R}, \qquad g\in \mathcal{D}'_\textup{per}, \enskip \phi \in \mathcal{D}_\textup{per}. \label{eq:dualperiodicdist}
\end{equation}
A short computation reveals that the definition in \eqref{eq:dualperiodicdist} is independent of the choice of $\psi$ with the property \eqref{eq:partitionofunity}. We refer to \cite[Section~5.2]{Saranen2002} for a more careful analysis of the duality between $\mathcal{D}'_\textup{per}$ and $\mathcal{D}_\textup{per}$.

The definition \eqref{eq:dualperiodicdist} enables introducing Fourier coefficients for periodic distributions. To streamline the notation, we denote the trigonometric monomials by $f_n := \e^{\I n (\cdot)}\in \mathcal{D}_\textup{per}$, $n\in\mathbb{Z}$.
\begin{definition} \label{defi:fouriercoeff}
	The Fourier coefficients of $g\in \mathcal{D}'_\textup{per}$ are defined as
	\begin{equation}
		\hat{g}(n) := \frac{1}{2\pi}\inner{g,f_{-n}}, \label{eq:fouriercoeff} \qquad n  \in \Z.
	\end{equation}
\end{definition}

\begin{remark}[Related to Definition~\ref{defi:fouriercoeff}] {}\

\begin{enumerate}[(i)]
	\item By identifying $L^1_\textup{loc}(\R)$ as a subspace of $\mathcal{D}'(\R)$ in the usual manner, it is easy to see that \eqref{eq:fouriercoeff} provides a generalization for the standard Fourier coefficients defined for $2\pi$-periodic $L^1_\textup{loc}(\R)$-functions.
	\item We identify the standard Dirac delta distribution $\delta_{\theta_0}$, $\theta_0\in (-\pi,\pi]$, with its $2\pi$-periodic counterpart
	\begin{equation*}
	\sum_{k\in \mathbb{Z}} \delta_{\theta_0+2\pi k}.
	\end{equation*}
	It follows immediately from \eqref{eq:fouriercoeff} that
	\begin{equation}\label{eq:F_delta}
	\hat{\delta}_{\theta_0}(n) = \frac{1}{2\pi}\e^{-\I n\theta_0}, \qquad n\in\mathbb{Z}.
	\end{equation}
\end{enumerate}
\end{remark}

Let us then define the periodic Sobolev spaces.
\begin{definition}
	The $2\pi$-periodic Sobolev space with a smoothness index $s \in \R$ is defined as
	\begin{equation*}
		H^s := \{ g \in \mathcal{D}'_\textup{per} \, \mid \, \norm{g}_s < \infty \},
	\end{equation*}
    where 
	\begin{equation} \label{eq:s_norm}
		\norm{g}_s := \biggl( 2 \pi \sum_{n\in\mathbb{Z}} \underline{n}^{2s}\abs{\hat{g}(n)}^2\biggr)^{1/2}
	\end{equation}
	and $\underline{n} := \max\{1,\abs{n}\}$.
	The mean free subspace of $H^s$ is
	\begin{equation*}
		H^s_\diamond := \{ g\in H^s \, \mid \, \hat{g}(0) = 0 \}.
	\end{equation*}
\end{definition}
It is obvious that $H^s$ (as well as its closed subspace $H_\diamond^s$) is  a Hilbert space when equipped with the inner product
\begin{equation}
	\inner{f,g}_s := 2 \pi \sum_{n\in\mathbb{Z}}\underline{n}^{2s}\hat{f}(n)\overline{\hat{g}(n)}, \qquad f,g\in H^s. \label{eq:innerprod}
\end{equation}
We write $L^2 := H^0$ and $L^2_{\diamond} := H^0_\diamond$, which is motivated by Parseval's theorem guaranteeing that $\inner{f,g}_0 = \inner{f,g}_{L^2}$ for all $f,g \in L^2(-\pi,\pi)$, that is, $H^0 \simeq L^2(-\pi,\pi)$.

The bracket $\inner{\cdot,\cdot}_{-s,s} : H^{-s}\times H^s\to \mathbb{C}$ denotes the sesquilinear dual pairing, acting as an extension of the inner product $\inner{\cdot,\cdot}_{L^2}$ on $L^2(-\pi,\pi)$. By mimicking the construction in \cite[Section~1.1]{Garde_2019c}, one can use this same pairing for realizing the duality between $H_\diamond^{-s}$ and $H_\diamond^s$ as well. 

\begin{remark} \label{remark:lions}
	We may identify $H^s$ with the standard Sobolev space $H^s(\partial D)$ on the boundary of the unit disk $D$. Indeed, via the identification $\theta \leftrightarrow \e^{\I \theta}$ the topologies of the two spaces coincide, and hence we may use their properties interchangeably. For more information, see \cite[Remark~7.6]{Lions1972} that characterizes $H^s(\partial\Omega)$ for any smooth bounded domain $\Omega$ with the help of an eigensystem for the Laplace--Beltrami operator on $\partial \Omega$; observe also that $f_n$, $n \in \Z$, can be identified on $\partial D$ with an eigenfunction for the Laplace--Beltrami operator, which is essentially the second angular derivative, with the corresponding eigenvalue being $-n^2$. In particular, the elements of $H^s$ can be identified with continuous $2\pi$-periodic functions for $s>\tfrac{1}{2}$ due to the Sobolev embedding theorem, and the ($2\pi$-periodic) Dirac delta distribution belongs to $H^s$ for any $s < -\tfrac{1}{2}$. The latter could also be easily deduced by combining \eqref{eq:F_delta} and \eqref{eq:s_norm}.
\end{remark}

Let us complete this section by briefly considering the convergence of Fourier series in different spaces. The partial sums of a Fourier series  are defined by
\begin{equation*}
	S_N g := \sum_{\abs{n}\leq N} \hat{g}(n)f_n, \qquad N \in \N_0.
\end{equation*}
According to \cite[Theorem~5.2.1 and Section~5.3]{Saranen2002}, $S_N g \to g$ in $X$ as $N\to\infty$ for any of the choices $X\in \{\mathcal{D}_\textup{per},H^s, \mathcal{D}'_\textup{per}\}$, $s \in \R$. Moreover,
\begin{equation}
	\inner{f_n,f_m}_s = 2 \pi \, \underline{n}^{2s}\delta_{n,m},  \qquad n,m \in \Z, \label{eq:expinner} 
\end{equation}
meaning that the trigonometric monomials form an orthogonal basis of $H^s$ for any $s \in \R$.  It now follows immediately from  \eqref{eq:innerprod} that
\begin{equation} \label{eq:zero_meanP}
	P_\diamond g := \sum_{\abs{n}\geq 1} \hat{g}(n) f_n, \qquad g \in H^s,
\end{equation}
defines the orthogonal projection of $H^s$ onto $H^s_\diamond$ for any $s \in \R$. 

\section{Approximations based on equidistant interpolation points} \label{sec:approxuniform}

This section considers two techniques for approximating a periodic function based on its pointwise values on a uniform grid over a single period: trigonometric interpolation and forming a weighted linear combination of Dirac delta distributions placed at the quadrature points. The latter is needed when continuum current patterns are transformed into pointwise currents for the point electrode model, whereas the former gives a natural technique for extending (relative) point electrode measurements to smooth functions over the whole object boundary. 

\subsection{Interpolation by trigonometric polynomials} \label{sec:interptrig}

We follow \cite[Section~8.1--8.3]{Saranen2002}, always choosing an odd number of interpolation points to avoid certain asymmetry when it comes to the degrees of the employed trigonometric polynomials and Fourier coefficients.

Let $\mathcal{T}^M \subset \mathcal{D}_{\rm per}$ be the complex vector space of trigonometric polynomials of degree at most $M$ and let $\mathcal{T}_\diamond^M$ be its mean free subspace, that is,
\begin{equation*}
	\mathcal{T}^M := \mspan\{f_n\}_{\abs{n}\leq M}, \qquad \mathcal{T}_{\diamond}^M := \mspan\{f_n\}_{0 < \abs{n}\leq M}.
\end{equation*}
Due to the definition of the inner product of $H^s$ in \eqref{eq:innerprod}, the formula
\begin{equation} \label{eq:P_M}
	P_M g := \sum_{\abs{n} \leq M} \hat{g}(n)f_n, \qquad g\in H^s,
\end{equation}
defines the orthogonal projection of $H^s$ onto $\mathcal{T}^M$ for any $s\in\mathbb{R}$. Moreover,  $P_M|_{H^s_\diamond}$ is obviously the orthogonal projection of $H^s_\diamond$ onto  $\mathcal{T}_\diamond^M$, and a comparison with \eqref{eq:zero_meanP} easily leads to the conclusion that $P_\diamond P_M = P_M P_\diamond$ is the orthogonal projection of $H^s$ onto $\mathcal{T}_{\diamond}^M$. According to \cite[Theorem~8.2.1 and Exercise 8.2.1]{Saranen2002},
\begin{equation}
	\norm{\ident - P_M}_{\mathscr{L}(H^s,H^t)} = (1+M)^{t-s} \label{eq:ortprojest}
\end{equation}
for any $s,t\in\mathbb{R}$ such that $s\geq t$. 

As a side note, which is not directly connected to electrode models, we immediately obtain estimates for the discrepancy introduced when $\Upsilon(\sigma)$ is replaced by its natural finite-dimensional approximations with respect to a trigonometric basis of $\mathcal{T}_\diamond^{M}$.  Take note that, up to the numerical errors introduced by the employed forward solver, $\Upsilon(\sigma) P_M$ is a finite-dimensional operator one would naturally use in optimization-based reconstruction algorithms with finite amount of data in the framework of the CM. On the other hand, $P_M \Upsilon(\sigma) P_M$ is a matrix approximation for $\Upsilon(\sigma)$ with respect to a trigonometric basis of $\mathcal{T}_\diamond^{M}$, more suitable for direct reconstruction methods.
\begin{proposition}
	For $M\in\mathbb{N}$ and $s,t\in \mathbb{R}$ with $s\geq t$, 
	\begin{align*}
	\norm{\Upsilon(\sigma) - \Upsilon(\sigma) P_M}_{\mathscr{L}(H_\diamond^s,H_\diamond^t)} &\leq (1+M)^{t-s}\norm{\Upsilon(\sigma)}_{\mathscr{L}(H_\diamond^t)}, \\[1mm]
	\norm{\Upsilon(\sigma) - P_M \Upsilon(\sigma) P_M}_{\mathscr{L}(H^s_\diamond,H^t_\diamond)} &\leq (1+M)^{t-s}\left(\norm{\Upsilon(\sigma)}_{\mathscr{L}(H^t_\diamond)} + \norm{\Upsilon(\sigma)}_{\mathscr{L}(H^s_\diamond)}\right).
	\end{align*}
\end{proposition}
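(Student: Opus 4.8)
The plan is to reduce both estimates to the operator-norm identity \eqref{eq:ortprojest} for $\ident - P_M$, together with the smoothing bound \eqref{eq:Tbound} for the relative ND map $\Upsilon(\sigma)$. For the first inequality I would write $\Upsilon(\sigma) - \Upsilon(\sigma)P_M = \Upsilon(\sigma)(\ident - P_M)$ and note that $\ident - P_M$ maps $H^s_\diamond$ into $H^t_\diamond$ (the mean free condition is preserved since $P_M$ commutes with $P_\diamond$), so that
\begin{equation*}
\norm{\Upsilon(\sigma)(\ident - P_M)}_{\mathscr{L}(H^s_\diamond,H^t_\diamond)} \leq \norm{\Upsilon(\sigma)}_{\mathscr{L}(H^t_\diamond)}\, \norm{\ident - P_M}_{\mathscr{L}(H^s_\diamond,H^t_\diamond)}.
\end{equation*}
The remaining point is that the norm of $\ident - P_M$ restricted to the mean free subspaces is still $(1+M)^{t-s}$; this follows because $\ident - P_M$ annihilates $f_0$ and acts on the $f_n$ with $|n|\geq 1$ exactly as in the full space, so restricting to $H^s_\diamond$ only removes a mode that was already in the kernel. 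Both $\norm{\Upsilon(\sigma)}_{\mathscr{L}(H^t_\diamond)}$ and $\norm{\ident - P_M}$ are finite by \eqref{eq:Tbound} and \eqref{eq:ortprojest}, giving the first bound.

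For the second inequality I would insert and subtract an intermediate term, e.g.
\begin{equation*}
\Upsilon(\sigma) - P_M \Upsilon(\sigma) P_M = (\Upsilon(\sigma) - P_M \Upsilon(\sigma)) + P_M(\Upsilon(\sigma) - \Upsilon(\sigma) P_M) = (\ident - P_M)\Upsilon(\sigma) + P_M \Upsilon(\sigma)(\ident - P_M),
\end{equation*}
and then estimate each summand. The second summand is bounded using the first inequality together with $\norm{P_M}_{\mathscr{L}(H^t_\diamond)} \leq 1$ (orthogonal projection), yielding $(1+M)^{t-s}\norm{\Upsilon(\sigma)}_{\mathscr{L}(H^t_\diamond)}$. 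For the first summand, $(\ident - P_M)\Upsilon(\sigma)$, I would use that $\Upsilon(\sigma)$ maps $H^s_\diamond$ boundedly into $H^s_\diamond$ (again \eqref{eq:Tbound}) and then apply $\ident - P_M : H^s_\diamond \to H^t_\diamond$, giving $(1+M)^{t-s}\norm{\Upsilon(\sigma)}_{\mathscr{L}(H^s_\diamond)}$. Adding the two contributions produces exactly the claimed right-hand side.

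I do not expect any genuine obstacle here; the only things to be slightly careful about are (a) checking that \eqref{eq:ortprojest}, stated on $H^s$, transfers verbatim to the mean free subspaces, which is immediate from the diagonal (Fourier-multiplier) structure of $P_M$; (b) using the duality-based realization of $\Upsilon(\sigma)$ on the mean free scales so that the compositions $(\ident - P_M)\Upsilon(\sigma)$ and $\Upsilon(\sigma)(\ident - P_M)$ make sense as operators between the relevant $H^\bullet_\diamond$ spaces — all of this is guaranteed by \eqref{eq:Tbound} holding for all real indices. No new estimates are needed beyond what the excerpt already provides.
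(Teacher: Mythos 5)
Your proof is correct and follows essentially the same route as the paper: both reduce everything to the identity \eqref{eq:ortprojest} for $\ident - P_M$, the boundedness of $\Upsilon(\sigma)$ on $H^s_\diamond$ and $H^t_\diamond$ from \eqref{eq:Tbound}, and $\norm{P_M}=1$, the only cosmetic difference being that you split $\Upsilon(\sigma) - P_M\Upsilon(\sigma)P_M$ as $(\ident-P_M)\Upsilon(\sigma) + P_M\Upsilon(\sigma)(\ident-P_M)$ while the paper uses the mirror-image decomposition $\Upsilon(\sigma)(\ident-P_M) + (\ident-P_M)\Upsilon(\sigma)P_M$, yielding the same two terms in the final bound.
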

\begin{proof}
	The assertion is a direct consequence of \eqref{eq:ortprojest}:
	\begin{align*}
	\norm{\Upsilon(\sigma) - P_M \Upsilon(\sigma) P_M}_{\mathscr{L}(H^s_\diamond,H^t_\diamond)} &\leq \norm{\Upsilon(\sigma) (\ident - P_M)}_{\mathscr{L}(H^s_\diamond,H^t_\diamond)} + \norm{(\ident - P_M) \Upsilon(\sigma) P_M}_{\mathscr{L}(H^s_\diamond,H^t_\diamond)} \\[1mm]
	&\leq (1+M)^{t-s}\left(\norm{\Upsilon(\sigma)}_{\mathscr{L}(H^t_\diamond)} + \norm{\Upsilon(\sigma)}_{\mathscr{L}(H^s_\diamond)}\right),
	\end{align*}
	where we also used the fact $\norm{P_M}_{\mathscr{L}(H^s_\diamond)} = 1$ as the projection is orthogonal.
\end{proof}

Let us then consider trigonometric interpolation of $2\pi$-periodic functions based on their values at $2M+1$ equidistant points over a single period. We denote the interpolation points by
\begin{equation*}
	\theta_m := m \, \frac{2\pi}{2M+1}, \qquad  \abs{m}\leq M.
\end{equation*}
The trigonometric interpolation basis $\{\phi_m\}_{\abs{m} \leq M} \subset \mathcal{T}^M$ is then defined via
\begin{equation*}
	\phi_m(\theta) := \frac{1}{2M+1}\sum_{\abs{n}\leq M} f_n(\theta-\theta_m) = \frac{1}{2M+1}\sum_{\abs{n}\leq M} \e^{-\I n \theta_m} f_n(\theta), \qquad \abs{m}\leq M.
\end{equation*}
We are particularly interested in the following special properties of these functions:
\begin{equation} \label{eq:basisorth}
	\phi_m(\theta_j) = \frac{2M + 1}{2 \pi} \langle \phi_m, \phi_j\rangle_{L^2} = \delta_{j,m}, \qquad \abs{j},\abs{m}\leq M,
\end{equation}
which are straightforward consequences of the following identity that holds for any $k\in\mathbb{Z}$,
\begin{equation}
	\sum_{\abs{n}\leq M} \e^{\I n k \frac{2\pi}{2M+1}} = \begin{cases}
		0, & k \not\equiv 0\enskip (\text{mod } 2M+1), \\[1mm]
		2M+1, & k \equiv 0\enskip (\text{mod } 2M+1).
	\end{cases} \label{eq:expsum}
\end{equation}
The second line of \eqref{eq:expsum} is immediately evident, while the first one follows from the formula for a truncated geometric series. In particular, \eqref{eq:basisorth} guarantees that $\{\phi_m\}_{\abs{m} \leq M}$ are linearly independent, and thus form a basis for $\mathcal{T}^M$.

\begin{remark} \label{remark:pointe_D}
	If one interprets $\{ \theta_m\}_{\abs{m}\leq M}$ as angular coordinates, then
	\begin{equation} \label{eq:pointe_D}
		x_m := \e^{\I \theta_m}, \qquad \abs{m}\leq M,
	\end{equation}
	are the corresponding interpolation points (i.e.~point electrodes), on the boundary of the unit disk~$D$. In particular, $\partial D\setminus \{x_m\}_{\abs{m}\leq M}$ consists of $2M+1$ arcs of equal length.
\end{remark}

We are now ready to introduce the \emph{trigonometric interpolation operator} $Q_M : H^s \to \mathcal{T}_M$, $s>\tfrac{1}{2}$, defined in the natural manner
\begin{equation} \label{eq:Q_M}
	Q_M g := \sum_{\abs{m}\leq M} g(\theta_m)\phi_m. 
\end{equation}
As its name suggests $Q_M g(\theta_j) = g(\theta_j)$, $\abs{j} \leq M$, due to \eqref{eq:basisorth}. Take note that the definition \eqref{eq:Q_M} is unambiguous since any $g \in H^s$, $s > \tfrac{1}{2}$, is continuous by the Sobolev embedding theorem; see Remark~\ref{remark:lions}. As $\{\phi_m\}_{\abs{m}\leq M}$ is a basis for $\mathcal{T}^M$, it must hold that
\begin{equation}
	Q_M g = \sum_{\abs{m}\leq M} g(\theta_m)\phi_m = g, \qquad g \in \mathcal{T}^M, \label{eq:TMdecomp}
\end{equation}
because this is the only way to give $g$ as a linear combination of  $\{\phi_m\}_{\abs{m} \leq M}$ at the interpolation points by virtue of \eqref{eq:basisorth}. In other words, $Q_M$ is a non-orthogonal projection of $H^s$ onto $\mathcal{T}_M$.

Based on \eqref{eq:basisorth} and \eqref{eq:TMdecomp}, the periodic trapezoidal quadrature rule, with the quadrature points chosen to be the interpolation points $\{\theta_m\}_{\abs{m}\leq M}$, can be applied to exactly evaluate $L^2$-inner products of functions in $\mathcal{T}_M$:
\begin{equation}
	\inner{f,g}_{L^2} = \frac{2 \pi}{2M+1} \sum_{\abs{m}\leq M} f(\theta_m)\overline{g(\theta_m)}, \qquad f,g\in \mathcal{T}^M. \label{eq:quadrule}
\end{equation}
Moreover, by choosing $f = \phi_j$ and $g \equiv 1$ in \eqref{eq:quadrule} and employing \eqref{eq:basisorth} for one more time, we obtain
\begin{equation*}
	\int_{-\pi}^\pi \phi_j(\theta)\,\di\theta = \frac{2\pi}{2M+1}, \qquad \abs{j} \leq M.
\end{equation*}
Hence, 
\begin{equation}
	\sum_{\abs{m}\leq M} g(\theta_m) = 0 \qquad \text{if and only if} \qquad \int_{-\pi}^\pi Q_M g(\theta) \,\di\theta = \frac{2\pi}{2M+1} \sum_{\abs{m}\leq M} g(\theta_m) =  0. \label{eq:zeromean}
\end{equation}
In particular, the latter condition obviously holds for all $g$ in the mean free subspace $\mathcal{T}_\diamond^M$, and hence the former also holds for such $g$, as could have been directly verified using \eqref{eq:expsum} as well.

To complete this subsection, we recall the following estimate from \cite[Theorem~8.3.1]{Saranen2002}:
\begin{equation}
	\norm{\ident-Q_M}_{\mathscr{L}(H^s,H^t)} \leq C_s \bigl(\tfrac{1}{2}+M\bigr)^{t-s}, \qquad s>\frac{1}{2}, \ 0\leq t \leq s, \label{eq:interpolationbnd}
\end{equation}
where
\begin{equation} \label{eq:s_constant}
	C_s := \biggl(\sum_{n\geq 0} \underline{n}^{-2s}\biggr)^{1/2}.
\end{equation}
Comparing this to \eqref{eq:ortprojest}, one sees that $Q_M$ provides asymptotically in $M$, and up to a multiplicative constant, as good an approximation for the (embedding) identity operator as the orthogonal projection $P_M$, if the conditions in \eqref{eq:interpolationbnd} are satisfied.

\begin{remark} \label{remark:zeromean}
	According to \eqref{eq:zeromean}, the interpolant $Q_M g$ of a continuous $g$ has vanishing mean if and only if $[g(\theta_m)]_{\abs{m}\leq M} \in \C_\diamond^{2M+1}$. In particular, $[g(\theta_m)]_{\abs{m}\leq M} \in \C_\diamond^{2M+1}$ for $g \in \mathcal{T}_\diamond^M$ by virtue of \eqref{eq:TMdecomp}. Since the PEM for the unit disk $D$ with $2M+1$ electrodes placed at the equiangular points $\{x_m\}_{\abs{m}\leq M} \subset \partial D$ employs currents in $\mathbb{C}^{2M+1}_\diamond$, we may thus identify electrode current patterns with trigonometric polynomials in $\mathcal{T}_{\diamond}^M$. Indeed, the mapping
	\begin{equation} \label{eq:tildeQ}
		\hat{Q}_M:	I \mapsto \sum_{\abs{m}\leq M} I_{m} \phi_m
	\end{equation}
	defines a bijection between $\mathbb{C}^{2M+1}_\diamond$ and $\mathcal{T}_{\diamond}^M$. This gives a natural method for going back and forth between admissible electrode current patterns in $\C_\diamond^{2 M +1}$ and admissible continuum current patterns in $\mathcal{T}_\diamond^M \subset H^s_\diamond \simeq H^s_\diamond(\partial D)$. Take note that $\hat{Q}_M I = Q_M g$ if and only if $I = [g(\theta_m)]_{\abs{m}\leq M}$ for a continuous function $g$. Moreover,
        $$
        \big\| \hat{Q}_M I \big\|_{L^2}^2 = \frac{2 \pi}{2M + 1} \sum_{\abs{m}\leq M} |I_m|^2, \qquad I \in \C^{2 M +1},
        $$
        by virtue of \eqref{eq:basisorth}, and thus
        \begin{equation}
          \label{eq:hatQnorm}
        \| \hat{Q}_M \|_{\mathscr{L}(\C^{2 M +1}, L^2)} = \sqrt{\frac{2 \pi}{2M + 1}}
        \end{equation}
if $\C^{2 M +1}$ is equipped with the Euclidean norm.       
\end{remark}

\subsection{Pointwise approximation of smooth enough functions} \label{sec:pointwiseapprox}

In this subsection, we review how $2\pi$-periodic continuous functions can be approximated in weak Sobolev topologies by linear combinations of Dirac delta distributions supported at $\{\theta_m\}_{\abs{m}\leq M}$. To this end, we define a \emph{point evaluation operator} $F_M : H^s \to H^t$ for $s>\frac{1}{2}$ and $t < -\frac{1}{2}$ (cf.~Remark~\ref{remark:lions}): 
\begin{equation*}
	F_M g := \frac{2\pi}{2M+1} \sum_{\abs{m}\leq M}  g(\theta_m) \delta_{\theta_m},
\end{equation*}
where, as always, $\delta_{\theta_m}$ is identified with its periodic extension. It is worth noting that the multiplier $\tfrac{2\pi}{2M+1}$ is the mesh parameter corresponding to the employed grid $\{\theta_m\}_{\abs{m}\leq M}$.

\begin{lemma} \label{lemma:pointevalbnd}
	For $M\in \mathbb{N}$ and $s>\frac{1}{2}$, 
	\begin{equation*}
		\norm{\ident - F_M}_{\mathscr{L}(H^s, H^{-s})} \leq \bigl( 2 \, C_s + C_s^2 \bigr)\bigl(\tfrac{1}{2}+M \bigr)^{-s},
	\end{equation*}
    where the constant $C_s$ from \eqref{eq:s_constant} is independent of $M$.
\end{lemma}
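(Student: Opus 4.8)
The plan is to reduce the bound to a statement about trigonometric polynomials. The key observation is that $F_M$ depends on its argument only through its nodal values, and since $Q_M$ reproduces nodal values (i.e.\ $Q_Mg(\theta_j)=g(\theta_j)$; cf.\ \eqref{eq:basisorth}), one has $F_M=F_MQ_M$ and hence the splitting
\[
  \ident-F_M=(\ident-Q_M)+(\ident-F_M)Q_M .
\]
Because $Q_Mg\in\mathcal T^M$ for every $g\in H^s$, $s>\tfrac12$, it thus suffices to (i) estimate $\norm{(\ident-Q_M)g}_{-s}$, (ii) control $\norm{Q_Mg}_0$, and (iii) estimate $\norm{(\ident-F_M)w}_{-s}$ for $w\in\mathcal T^M$.

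For (i) and (ii) I would simply combine the interpolation estimate \eqref{eq:interpolationbnd} with $t=0$ and the contractive embedding $L^2=H^0\hookrightarrow H^{-s}$ (valid since $\underline n^{-2s}\le1$ for $s\ge0$): this gives $\norm{(\ident-Q_M)g}_{-s}\le\norm{(\ident-Q_M)g}_0\le C_s(\tfrac12+M)^{-s}\norm{g}_s$, and likewise $\norm{Q_Mg}_0\le\norm{g}_0+\norm{(\ident-Q_M)g}_0\le\bigl(1+C_s(\tfrac12+M)^{-s}\bigr)\norm{g}_s$.

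Step (iii) is the core of the argument. For $w\in\mathcal T^M$, using \eqref{eq:F_delta} and \eqref{eq:expsum} one finds that the Fourier coefficients of $F_Mw$ are obtained by $(2M+1)$-periodic aliasing of those of $w$; concretely, $\widehat{F_Mw}(n)=\hat w(n_0)$, where $n_0\in\{-M,\dots,M\}$ is the unique representative of $n$ modulo $2M+1$. Consequently $(\ident-F_M)w$ has vanishing Fourier coefficients for $\abs n\le M$ and coefficient $-\hat w(n_0)$ for $\abs n>M$, whence by \eqref{eq:s_norm}
\[
  \norm{(\ident-F_M)w}_{-s}^2=2\pi\sum_{\abs{n_0}\le M}\abs{\hat w(n_0)}^2\sum_{\substack{n\equiv n_0\\ \abs n>M}}\abs n^{-2s}.
\]
It then remains to prove the elementary bound $\sum_{n\equiv n_0,\ \abs n>M}\abs n^{-2s}\le C_s^2(\tfrac12+M)^{-2s}$, uniformly in $\abs{n_0}\le M$. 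To this end I would group the aliased frequencies into the pairs $\{\,n_0+(2M+1)j,\ n_0-(2M+1)j\,\}$, $j\ge1$; both members of the $j$-th pair have modulus at least $(2M+1)j-M\ge(2j-1)(\tfrac12+M)$, so this pair contributes at most $2\bigl((2j-1)(\tfrac12+M)\bigr)^{-2s}\le\bigl(\underline{2j-2}^{\,-2s}+\underline{2j-1}^{\,-2s}\bigr)(\tfrac12+M)^{-2s}$, and summing over $j\ge1$ reproduces exactly the series \eqref{eq:s_constant} defining $C_s^2$. This delivers $\norm{(\ident-F_M)w}_{-s}\le C_s(\tfrac12+M)^{-s}\norm{w}_0$ for all $w\in\mathcal T^M$, where $\norm{w}_0=\norm{w}_{L^2}$.

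Combining the three steps, $\norm{(\ident-F_M)g}_{-s}\le C_s(\tfrac12+M)^{-s}\norm{g}_s+C_s(\tfrac12+M)^{-s}\norm{Q_Mg}_0\le 2C_s(\tfrac12+M)^{-s}\norm{g}_s+C_s^2(\tfrac12+M)^{-2s}\norm{g}_s$, and using $(\tfrac12+M)^{-2s}\le(\tfrac12+M)^{-s}$ (because $\tfrac12+M\ge1$ for $M\in\mathbb N$) this collapses to the asserted bound $(2C_s+C_s^2)(\tfrac12+M)^{-s}\norm{g}_s$. The only genuinely delicate point is the uniform-in-$n_0$ series estimate in step (iii): although entirely elementary, it requires pairing the aliased frequencies correctly and checking the arithmetic inequality $(2M+1)j-M\ge(2j-1)(\tfrac12+M)$, so that the comparison with the defining series of $C_s$ is term-by-term tight rather than merely order-correct.
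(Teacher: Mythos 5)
Your argument is correct and yields exactly the asserted constant, but it follows a genuinely different route from the paper. The paper works entirely by duality: it pairs $(\ident-F_M)f$ against a test function $g\in H^s$, observes that $\inner{F_Mf,g}_{-s,s}$ is precisely the periodic trapezoidal quadrature of $f\bar g$, which by the exactness property \eqref{eq:quadrule} equals $\inner{Q_Mf,Q_Mg}_{L^2}$, and then expands $\inner{f,g}_{L^2}-\inner{Q_Mf,Q_Mg}_{L^2}$ into three cross terms each controlled by \eqref{eq:interpolationbnd}; the bound $2C_s+C_s^2$ drops out after using $(\tfrac12+M)^{-2s}\le(\tfrac12+M)^{-s}$. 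You instead factor through $Q_M$ on the domain side via $F_M=F_MQ_M$ and compute $\norm{(\ident-F_M)w}_{-s}$ for $w\in\mathcal T^M$ directly from the $(2M+1)$-periodic aliasing of the Fourier coefficients of the delta comb; your pairing of the aliased frequencies and the inequality $(2M+1)j-M\ge(2j-1)(\tfrac12+M)$ are correct (the difference is exactly $\tfrac12$), and the term-by-term comparison with the series \eqref{eq:s_constant} does reproduce $C_s^2$ exactly, including the $j=1$ pair matching the two leading terms $\underline{0}^{-2s}+\underline{1}^{-2s}=2$. The paper's duality argument is shorter and symmetric in $f$ and $g$, whereas your computation is heavier but makes the underlying mechanism explicit and yields the sharp intermediate estimate $\norm{(\ident-F_M)w}_{-s}\le C_s(\tfrac12+M)^{-s}\norm{w}_{L^2}$ on $\mathcal T^M$, which is essentially the aliasing bound that also underlies the cited interpolation estimate \eqref{eq:interpolationbnd}. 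That the two approaches land on the identical constant $2C_s+C_s^2$ is a pleasant consistency check rather than an accident.
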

\begin{proof}
	Let $f,g\in H^s$, $s>\tfrac{1}{2}$, and recall from Section~\ref{sec:Sobolev} that  $\inner{\cdot,\cdot}_{-s,s} : H^{-s}\times H^s \to \C$ denotes the sesquilinear dual bracket. By using the definition of $F_M$ and applying \eqref{eq:quadrule} to the interpolated functions $Q_M f$ and $Q_M g$, we deduce
	\begin{align*}
		\inner{(\ident-F_M)f,g}_{-s,s} &= \inner{f,g}_{L^2} - \frac{2 \pi}{2M+1}\sum_{\abs{m}\leq M} f(\theta_m)\overline{g(\theta_m)} \\
		&= \inner{f,g}_{L^2} - \inner{Q_M f, Q_M g}_{L^2} \\[1mm]
		&= \inner{f,(\ident-Q_M)g}_{L^2} + \inner{(\ident - Q_M)f,g}_{L^2} - \inner{(\ident- Q_M)f,(\ident-Q_M)g}_{L^2}.
	\end{align*} 
	Employing  \eqref{eq:interpolationbnd} and the continuity of the embedding $H^s\hookrightarrow L^2$ now yields
	\begin{align*}
		\abs{\inner{(\ident-F_M)f,g}_{-s,s}} &\leq \bigl(2 \,\norm{\ident- Q_M}_{\mathscr{L}(H^s,L^2)} + \norm{\ident- Q_M}_{\mathscr{L}(H^s,L^2)}^2\bigr)\norm{f}_s\norm{g}_s \\[1mm]
		&\leq \bigl(2\, C_s + C_s^2 \bigr) \bigl(\tfrac{1}{2}+M \bigr)^{-s}\norm{f}_s\norm{g}_s.
	\end{align*}
	Finally, taking the supremum over $f,g \in H^s$ with $\norm{f}_s = \norm{g}_s=1$ gives the sought for bound.
\end{proof}
When considering the point electrode model of EIT in the next section, it is mandatory that any linear combination of Dirac deltas defining a current pattern has vanishing mean, that is, the coefficients defining the linear combination must sum to zero. It is easy to check that $F_M g$ has zero mean if and only if $\sum_{\abs{m}\leq M} g(\theta_m) = 0$. In particular, we thus have $F_M|_{\mathcal{T}_\diamond^M} : \mathcal{T}_{\diamond}^M \to H^t_\diamond$, $t < -\tfrac{1}{2}$, based on the remark after \eqref{eq:zeromean}. To ensure the zero mean condition holds more generally, we need to combine $F_M$ with $G_M: H^s \to H^s$, $s > \tfrac{1}{2}$, defined by
\begin{equation} \label{eq:G_M}
	G_M g := g - \frac{1}{2M+1}\sum_{\abs{m}\leq M} g(\theta_m).
\end{equation}	
It is straightforward to check that $F_M G_M: H^s \to H^t_\diamond$ for any $s > \tfrac{1}{2}$ and $t < -\tfrac{1}{2}$. Moreover, $G_M$ is the identity operator when restricted to $\mathcal{T}_\diamond^M$, and thus $F_M G_M = F_M$ on $\mathcal{T}_\diamond^M$; see again the comment succeeding \eqref{eq:zeromean}.

The following lemma considers the approximation of identity by $F_M G_M$ on $H^s_\diamond$, $s>\frac{1}{2}$, i.e.,~on a space of admissible continuous current patterns for the continuum model of EIT. 
\begin{lemma} \label{lemma:pointevalbnd2}
	For $M\in \mathbb{N}$ and $s>\frac{1}{2}$, 
	\begin{equation*}
		\norm{\ident - F_MG_M}_{\mathscr{L}(H^s_\diamond, H^{-s}_\diamond)} \leq \bigl(1 + \sqrt{2}\,C_s \bigr) \bigl(2\,C_s + C_s^2\bigr)\bigl(\tfrac{1}{2}+M \bigr)^{-s},
	\end{equation*}
    where the constant $C_s$ from \eqref{eq:s_constant} is independent of $M$.
\end{lemma}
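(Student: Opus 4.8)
The plan is to reduce the estimate for $F_M G_M$ on $H^s_\diamond$ to the estimate for $F_M$ on all of $H^s$ from Lemma~\ref{lemma:pointevalbnd}, together with a bound on how far $G_M$ differs from the identity on $H^s_\diamond$. First I would write, for $g \in H^s_\diamond$,
\begin{equation*}
(\ident - F_M G_M) g = (\ident - F_M) g + F_M(g - G_M g),
\end{equation*}
and note that $g - G_M g = \frac{1}{2M+1}\sum_{\abs{m}\leq M} g(\theta_m)$ is a constant function, call it $c_g \mathbf{1}$ with $c_g = \frac{1}{2M+1}\sum_{\abs{m}\leq M} g(\theta_m)$. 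Since $g$ has zero mean, $c_g = \frac{1}{2M+1}\sum_{\abs{m}\leq M} g(\theta_m) - \frac{1}{2\pi}\inner{1,g}_{L^2}$, which is (up to the factor $\tfrac{1}{2\pi}$) exactly $\inner{(\ident - F_M)g, 1}_{-s,s}$ with a sign — more precisely, $\frac{2\pi}{2M+1}\sum g(\theta_m) - \inner{g,1}_{L^2} = -\inner{(\ident-F_M)g,1}_{-s,s}$ by the definition of $F_M$ acting against the constant $1$. Hence $\abs{c_g} \le \frac{1}{2\pi}\norm{\ident-F_M}_{\mathscr{L}(H^s,H^{-s})}\norm{g}_s\norm{1}_s$, and $\norm{1}_s = \norm{f_0}_s = \sqrt{2\pi}$ by \eqref{eq:s_norm}, so $\abs{c_g} \le \frac{1}{\sqrt{2\pi}}\bigl(2C_s + C_s^2\bigr)\bigl(\tfrac12+M\bigr)^{-s}\norm{g}_s$.

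Next I would control the second term $F_M(c_g \mathbf{1})$. Since $\mathbf{1}$ is a constant, $F_M \mathbf{1} = \frac{2\pi}{2M+1}\sum_{\abs{m}\leq M}\delta_{\theta_m}$; I would bound its $H^{-s}$-norm by writing $F_M \mathbf{1} = (F_M - \ident)\mathbf{1} + \mathbf{1}$ and using $\norm{\mathbf{1}}_{-s} \le \norm{\mathbf{1}}_s = \sqrt{2\pi}$ together with $\norm{(F_M - \ident)\mathbf{1}}_{-s} \le \norm{\ident - F_M}_{\mathscr{L}(H^s,H^{-s})}\norm{\mathbf{1}}_s \le (2C_s+C_s^2)(\tfrac12+M)^{-s}\sqrt{2\pi} \le \sqrt{2\pi}(2C_s+C_s^2)$ (using $(\tfrac12+M)^{-s}\le 1$ for $M\ge 1$, $s>0$), hence $\norm{F_M\mathbf{1}}_{-s} \le \sqrt{2\pi}\bigl(1 + 2C_s + C_s^2\bigr) = \sqrt{2\pi}(1+C_s)^2$. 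Then $\norm{F_M(c_g\mathbf 1)}_{-s} = \abs{c_g}\,\norm{F_M\mathbf 1}_{-s} \le (1+C_s)^2 (2C_s+C_s^2)(\tfrac12+M)^{-s}\norm{g}_s$. Combining with Lemma~\ref{lemma:pointevalbnd} for the first term gives
\begin{equation*}
\norm{(\ident - F_M G_M)g}_{-s} \le \bigl(1 + (1+C_s)^2\bigr)\bigl(2C_s + C_s^2\bigr)\bigl(\tfrac12+M\bigr)^{-s}\norm{g}_s,
\end{equation*}
and since the left-hand side lies in $H^{-s}_\diamond$ the operator norm over $\mathscr{L}(H^s_\diamond,H^{-s}_\diamond)$ is bounded by the same quantity. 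Finally I would need to reconcile the constant $1+(1+C_s)^2$ with the claimed $1+\sqrt 2\,C_s$; this suggests the intended argument is slightly tighter — likely bounding $c_g$ directly via $\abs{\sum_{\abs{m}\le M} g(\theta_m)}/(2M+1)$ using $[g(\theta_m)] \in \C_\diamond^{2M+1}$ is not available for general $g\in H^s_\diamond$, so instead one estimates $\norm{g - G_M g}_{-s}$ by noting it is a constant and $\norm{c_g\mathbf 1}_s = \sqrt{2\pi}\,\abs{c_g}$, then uses $c_g \mathbf 1 = -(\ident - G_M^{*}\text{-type correction})$... In any case the structure is: triangle inequality splitting off the mean-correction, estimate the constant $c_g$ by testing $(\ident-F_M)g$ against $\mathbf 1$, and absorb everything into $(\tfrac12+M)^{-s}$.

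The main obstacle is bookkeeping the constants so as to land exactly on $(1+\sqrt2\,C_s)(2C_s+C_s^2)$: the cleanest route is probably to write $\ident - F_M G_M = (\ident - F_M) + F_M(\ident - G_M)$ and observe that $\ident - G_M$ maps $H^s$ to constants with $\norm{(\ident-G_M)g}_{L^2} = \sqrt{2\pi}\,\abs{c_g}$, then bound $\abs{c_g}$ by recognizing $\frac{1}{2M+1}\sum g(\theta_m)$ as a quadrature for $\frac{1}{2\pi}\inner{g,1}_{L^2}=0$, so $\abs{c_g}$ equals the quadrature error which is itself controlled by $\norm{\ident - Q_M}_{\mathscr{L}(H^s,L^2)} \le C_s(\tfrac12+M)^{-s}$ applied appropriately (giving the factor $\sqrt 2\,C_s$ after tracking the $\sqrt{2\pi}$'s and using $\inner{Q_M g,1}_{L^2}$ reasoning), while $\norm{F_M}_{\mathscr{L}(L^2\cap\text{const},H^{-s})}$ contributes a further bounded factor; the remaining $2C_s+C_s^2$ then comes directly from Lemma~\ref{lemma:pointevalbnd} via $F_M$ being close to $\ident$. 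I expect the delicate point to be justifying that the mean-correction term does not worsen the rate and combining the two pieces with the right constant rather than the crude $(1+C_s)^2$ I get from the naive splitting.
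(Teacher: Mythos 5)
Your decomposition $\ident - F_M G_M = (\ident - F_M) + F_M(\ident - G_M)$ is sound and, as you carry it out, does yield the correct rate $(\tfrac{1}{2}+M)^{-s}$ with an $M$-independent constant; in particular your estimate of $c_g$ by testing $(\ident - F_M)g$ against the constant function $1$ (using that $g$ is mean free) is correct. However, this route lands on the constant $\bigl(1+(1+C_s)^2\bigr)\bigl(2C_s+C_s^2\bigr)$ rather than the stated $\bigl(1+\sqrt{2}\,C_s\bigr)\bigl(2C_s+C_s^2\bigr)$, and since $C_s>1$ these genuinely differ. The paper uses the other factorization, $\ident - F_M G_M = (\ident - F_M)G_M + (\ident - G_M)$, and the whole point is that $(\ident - G_M)f$ is a \emph{constant}, so its dual pairing against any mean-free $g\in H^s_\diamond$ vanishes identically: the mean correction costs nothing and need not be estimated at all. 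Everything then reduces to $\norm{\ident - F_M}_{\mathscr{L}(H^s,H^{-s})}\,\norm{G_M}_{\mathscr{L}(H^s)}$, and the factor $1+\sqrt{2}\,C_s$ is precisely the bound on $\norm{G_M}_{\mathscr{L}(H^s)}$ coming from the Sobolev embedding, via $\norm{(\ident-G_M)g}_s = \sqrt{2\pi}\,\abs{c_g} \le \sqrt{2\pi}\,\sup\abs{g} \le \sqrt{2}\,C_s\norm{g}_s$. In your splitting, by contrast, the constant is first hit by $F_M$, which converts it into a combination of Dirac deltas that does \emph{not} annihilate mean-free test functions, and that is exactly where your extra factor originates; your closing speculation about recovering $\sqrt{2}\,C_s$ through quadrature-error reasoning for $Q_M$ is not how the paper proceeds. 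If the lemma is read as asserting the exact displayed constant, your proof as written does not establish it; if only the rate and the $M$-independence of the constant matter (which is all that is used downstream in Theorem~\ref{thm:est1}), your argument suffices.
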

\begin{proof}
	We first demonstrate that proving the assertion boils down to writing a uniform estimate with respect to $M$ for the norm of $G_M: H^s \to H^s$, $s > \tfrac{1}{2}$. Indeed,
	\begin{align}\label{eq:mfree_est1}
		\norm{\ident - F_MG_M}_{\mathscr{L}(H^s_\diamond, H^{-s}_\diamond)} &= \sup_{f,g \in H^s_\diamond\setminus\{0\}} \frac{1}{\norm{f}_s \norm{g}_s} \abs{\inner{(\ident - F_MG_M)f, g}_{-s,s}} \nonumber  \\
		&= \sup_{f,g \in H^s_\diamond\setminus\{0\}} \frac{1}{\norm{f}_s \norm{g}_s} \abs{\inner{(\ident - F_M)G_M f, g}_{-s,s}} \nonumber \\[1mm]
		&\leq \norm{(\ident - F_M)G_M}_{\mathscr{L}(H^s, H^{-s})} \nonumber \\[1mm]
		&\leq \norm{\ident - F_M}_{\mathscr{L}(H^s, H^{-s})} \norm{G_M}_{\mathscr{L}(H^s)},
	\end{align}
	where the second step follows from the mean free function $g \in H^s_\diamond$ not seeing the constant $(\ident - G_M) f$, cf.~\eqref{eq:G_M}. Since the first term on the right-hand side of \eqref{eq:mfree_est1} can be handled by Lemma~\ref{lemma:pointevalbnd}, we only need to worry about the second one.

	A direct calculation gives for $g\in H^s$ with $s>\tfrac{1}{2}$:
	\begin{align}
		\norm{G_M g}_{s} &\leq \norm{g}_s + \frac{1}{2M+1}\sum_{\abs{m}\leq M}\norm{g(\theta_m)}_s \notag\\
		&= \norm{g}_s + \frac{\sqrt{2\pi}}{2M+1}\sum_{\abs{m}\leq M}\abs{g(\theta_m)} \notag\\
		&\leq \norm{g}_s + \sqrt{2 \pi} \sup_{\theta\in (-\pi,\pi]} \abs{g(\theta)} \notag\\
		&\leq \bigl( 1+ \sqrt{2} \, C_s \bigr)\norm{g}_s, \label{eq:Gbnd}
	\end{align}
	where the last step follows from the Sobolev embedding theorem; see \cite[Lemma~5.3.2]{Saranen2002} for the case of one-periodic functions. Combining \eqref{eq:mfree_est1} and \eqref{eq:Gbnd} with Lemma~\ref{lemma:pointevalbnd} completes the proof.
\end{proof}

\section{Equiangular point electrodes for the unit disk} \label{sec:unit_disk}

Let us return to the setting of Section~\ref{sec:forward} with $\Omega = D \subset \C$ being the unit disk; the case of a more general two-dimensional smooth and simply connected domain will be analyzed in Section~\ref{sec:general_domain} below, including considerations related to the CEM. In particular, we assume $\sigma\in L^\infty(D)$ satisfies the conditions in \eqref{eq:sigma_condition1} with $\Omega$ replaced by $D$. In what follows, we make the identification $\theta \leftrightarrow \e^{\I \theta}$, which means we can use $H^s$ and $H^s(\partial D)$, $s \in \R$, interchangeably; see Remark~\ref{remark:lions}.

Assume one would like to apply the continuum current pattern $f \in H^s_{\diamond}(\partial D)$, $s>\tfrac{1}{2}$, and measure the corresponding {\em relative potential} on the whole boundary $\partial D$, but due to practical restrictions the measurements need to be carried out with $2M+1$ infinitesimal electrodes at the (interpolation) points $\{\theta_m\}_{\abs{m}\leq M} \simeq \{x_m\}_{\abs{m}\leq M} \subset \partial D$ defined in \eqref{eq:pointe_D}. Our method for approximating the smooth output $\Upsilon(\sigma)f$ of the CM is as follows:
\begin{enumerate}[(i)]
	\item Introduce
	\begin{equation}\label{eq:explicitI}
		I = \frac{2 \pi}{2 M+1} \left( \begin{bmatrix}
    		f(\theta_{-M}) \\
    		\vdots \\
    		f (\theta_M)
    	\end{bmatrix}
		-\frac{1}{2 M +1} \sum_{\abs{m}\leq M}f(\theta_m) \, \mathbf{1} \right) \in \C_\diamond^{2 M+1}
	\end{equation}
	as the electrode current pattern for the PEM.
	\item Perform the measurements of the PEM to retrieve $U = \Upsilon_{\rm PEM}(\sigma) I \in \C_\diamond^{2 M +1}$.
	\item Build an approximation $g$ for the continuum boundary potential $\Upsilon(\sigma)f$ via trigonometric interpolation, that is, $g = \hat{Q}_M U \in \mathcal{T}_\diamond^M$, where the bijection $\hat{Q}_M$ is defined by \eqref{eq:tildeQ}.
\end{enumerate}
Although it is not necessarily completely evident, another way of writing the above approximation procedure is
\begin{equation}\label{eq:equivalence}
	g = P_\diamond Q_M \Upsilon(\sigma) F_M G_M f.
\end{equation}
Let us clarify this claim. First of all, it is easy to check that
\begin{equation*}
	F_M G_M f = f_I \in H^{-s}_\diamond
\end{equation*}
for $I$ from \eqref{eq:explicitI} and the corresponding $f_I$ defined as in \eqref{eq:f_I} with $\Omega$ replaced by $D$ and $\{y_m \}_{\abs{m}\leq M}$ by $\{x_m \}_{\abs{m}\leq M}$. Thus $U \in \C_\diamond^{2 M +1}$ equals the vector obtained by evaluating $\Upsilon(\sigma) f_I$ at the point electrodes up to the addition of a multiple of $\mathbf{1} \in \C^{2 M +1}$ (cf.~\eqref{eq:Upsilon}). It is easy to check that this component in the direction of  $\mathbf{1}$ only affects the component of the trigonometric polynomial $\hat{Q}_M U \in \mathcal{T}^M$ in the direction of the constant function, meaning in particular that
\begin{equation*}
	Q_M \Upsilon(\sigma) F_M G_M f = Q_M \Upsilon(\sigma) f_I =  \hat{Q}_M U + c = g + c
\end{equation*}
for some constant $c \in \C$. Taking the orthogonal projection $P_\diamond$ onto the mean free trigonometric polynomials thus proves \eqref{eq:equivalence}.

To summarize, considering the accuracy of the above introduced procedure for mimicking CM measurements for the unit disk, with a finite number of electrodes $\{x_m\}_{\abs{m}\leq M}$ modeled by the PEM, is equivalent to proving estimates for the discrepancy between the CM measurement map $\Upsilon(\sigma)$ and its modified version $P_\diamond Q_M \Upsilon(\sigma) F_M G_M$.
\begin{theorem} \label{thm:est1}
	For $M\in \mathbb{N}$ and $s>\frac{1}{2}$, 
	\begin{equation}
		\norm{\Upsilon(\sigma) - P_\diamond Q_M \Upsilon(\sigma) F_M G_M}_{\mathscr{L}(H^s_\diamond, L^2_\diamond)} \leq C\bigl(\tfrac{1}{2}+M \bigr)^{-s}, \label{eq:finalest}
	\end{equation}
    where the positive constant $C=C(s,\sigma)$ is independent of $M$.
\end{theorem}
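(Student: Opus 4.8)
The plan is to decompose the operator difference $\Upsilon(\sigma) - P_\diamond Q_M \Upsilon(\sigma) F_M G_M$ into a sum of three pieces, each of which measures the error introduced by one of the three discretization steps (replacing $f$ by pointwise data, applying $\Upsilon(\sigma)$, and interpolating the output), and then to bound each piece using the smoothening property \eqref{eq:Tbound} of the relative ND map together with the interpolation and point-evaluation estimates \eqref{eq:ortprojest}, \eqref{eq:interpolationbnd}, and Lemma~\ref{lemma:pointevalbnd2}. The key algebraic identity I would use is the telescoping
\begin{align*}
\Upsilon(\sigma) - P_\diamond Q_M \Upsilon(\sigma) F_M G_M
&= \bigl(\ident - P_\diamond Q_M\bigr)\Upsilon(\sigma) \\
&\quad + P_\diamond Q_M \Upsilon(\sigma)\bigl(\ident - F_M G_M\bigr).
\end{align*}
Here I have implicitly used that $P_\diamond \Upsilon(\sigma) = \Upsilon(\sigma)$ on $H^s_\diamond$ (the output of the relative ND map is mean free), so the first term really is $(\ident - P_\diamond Q_M)\Upsilon(\sigma)$ acting into $L^2_\diamond$.

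For the first term, I would observe that $P_\diamond Q_M$ differs from $Q_M$ only in the constant Fourier mode, and since the output $\Upsilon(\sigma) f$ is already mean free, one can replace $(\ident - P_\diamond Q_M)\Upsilon(\sigma)$ by $P_\diamond(\ident - Q_M)\Upsilon(\sigma)$; then $\norm{P_\diamond}_{\mathscr{L}(L^2)} = 1$ and the interpolation bound \eqref{eq:interpolationbnd} with $t = 0$ gives $\norm{(\ident - Q_M)}_{\mathscr{L}(H^{s+1}, L^2)} \leq C_{s+1}(\tfrac12 + M)^{-(s+1)}$, while \eqref{eq:Tbound} bounds $\norm{\Upsilon(\sigma)}_{\mathscr{L}(H^s_\diamond, H^{s+1}_\diamond)}$ by a constant depending on $s$ and $\sigma$ (one uses that $\Upsilon(\sigma)$ maps $H^s_\diamond$ boundedly into $H^{s+1}_\diamond$, in fact into much smoother spaces). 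This yields a bound of order $(\tfrac12 + M)^{-(s+1)}$, which is even better than required. For the second term, I would use Lemma~\ref{lemma:pointevalbnd2} to bound $\norm{\ident - F_M G_M}_{\mathscr{L}(H^s_\diamond, H^{-s}_\diamond)} \leq C(s)(\tfrac12+M)^{-s}$, then \eqref{eq:Tbound} to bound $\norm{\Upsilon(\sigma)}_{\mathscr{L}(H^{-s}_\diamond, H^1_\diamond)}$ (again a constant depending only on $s$ and $\sigma$), and finally I need a uniform-in-$M$ bound for $\norm{P_\diamond Q_M}_{\mathscr{L}(H^1, L^2)}$; this follows because $\norm{Q_M}_{\mathscr{L}(H^1, L^2)} \leq \norm{\ident}_{\mathscr{L}(H^1,L^2)} + \norm{\ident - Q_M}_{\mathscr{L}(H^1, L^2)} \leq 1 + C_1(\tfrac12+M)^{-1}$ by \eqref{eq:interpolationbnd} with $s=1$, $t=0$, which is bounded uniformly in $M$, and $\norm{P_\diamond}_{\mathscr{L}(L^2)} = 1$. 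Multiplying these gives the second term a bound of order $(\tfrac12+M)^{-s}$, which is the dominant contribution and matches the claimed rate.

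The main obstacle—though it is more bookkeeping than a genuine difficulty—is keeping the mean-free subspaces and the constant-mode corrections straight: one must check carefully that the various operators genuinely map between the mean-free spaces as claimed (so that the dual pairing $\inner{\cdot,\cdot}_{-s,s}$ restricts correctly to the mean-free duality, and so that composing with $P_\diamond$ or $G_M$ does not spoil anything), and that the interpolation estimate \eqref{eq:interpolationbnd}, which is stated for $H^s \to H^t$ with $0 \leq t \leq s$, is applied only within its stated range (here $t = 0$ and $s$ replaced by $s+1 > \tfrac32$ or by $1$, both admissible). Once the decomposition is set up and these domain/codomain issues are verified, the estimate \eqref{eq:finalest} drops out by collecting the two bounds and absorbing all $s$- and $\sigma$-dependent factors into a single constant $C = C(s,\sigma)$, noting that the first term decays strictly faster than the second so the overall rate is $(\tfrac12 + M)^{-s}$.
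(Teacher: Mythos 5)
Your proposal is correct and follows essentially the same route as the paper: the identical telescoping decomposition using $P_\diamond\Upsilon(\sigma)f=\Upsilon(\sigma)f$, with the first term handled by \eqref{eq:interpolationbnd} and \eqref{eq:Tbound} and the second by Lemma~\ref{lemma:pointevalbnd2} together with the smoothening property of $\Upsilon(\sigma)$. The only (immaterial) difference is in the second term, where the paper bounds $Q_M$ applied to the smooth output via the quadrature identity \eqref{eq:quadrule} and a Sobolev sup-norm estimate, whereas you use the uniform-in-$M$ bound $\norm{Q_M}_{\mathscr{L}(H^1,L^2)}\leq 1+C_1$ obtained from \eqref{eq:interpolationbnd} by the triangle inequality; both yield the same rate.
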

\begin{proof}
	Let us consider an arbitrary $s > \tfrac{1}{2}$ and $f\in H^s_\diamond$. Since $\Upsilon(\sigma)\in \mathscr{L}(H^{-s}_\diamond,H^s_\diamond)$ due to \eqref{eq:Tbound} and $(\ident - F_M G_M)\in \mathscr{L}(H^{s}_\diamond,H^{-s}_\diamond)$ by Lemma~\ref{lemma:pointevalbnd2}, the Sobolev embedding theorem gives
	\begin{equation}
		\sup_{\theta\in (-\pi,\pi]}\abs{\Upsilon(\sigma)(\ident - F_M G_M) f(\theta)} \leq C(s,\sigma) \bigl(\tfrac{1}{2}+M \bigr)^{-s} \norm{f}_s. \label{eq:diffNDest}
	\end{equation}
	Combining \eqref{eq:quadrule} and \eqref{eq:diffNDest}, we thus get
	\begin{align}
		\norm{Q_M \Upsilon(\sigma) (\ident-F_M G_M)f}_{L^2} &= \biggl(\frac{2\pi}{2M+1}\sum_{\abs{m}\leq M} \abs{\Upsilon(\sigma) (\ident - F_M G_M)f(\theta_m)}^2\biggr)^{1/2} \notag\\
		&\leq C(s,\sigma) \bigl(\tfrac{1}{2}+M\bigr)^{-s}\norm{f}_s. \label{eq:est1}
	\end{align}
	As $P_\diamond \Upsilon(\sigma) f = \Upsilon(\sigma) f \in H^s_\diamond$ and $\norm{P_\diamond}_{\mathscr{L}(L^2)} = 1$ due to $P_\diamond$ being an orthogonal projection, we have
	\begin{align} \label{eq:help_est}
		\norm{\Upsilon(\sigma) f - P_\diamond Q_M \Upsilon(\sigma) F_MG_M f}_{L^2} & \leq \norm{P_\diamond(\ident - Q_M)\Upsilon(\sigma) f}_{L^2} + \norm{P_\diamond Q_M \Upsilon(\sigma) (\ident - F_M G_M) f}_{L^2}  \nonumber \\[1mm]
		&\leq C_s\bigl(\tfrac{1}{2}+M\bigr)^{-s} \norm{\Upsilon(\sigma) f}_s + C(s,\sigma)\bigl(\tfrac{1}{2}+M\bigr)^{-s}\norm{f}_s  \\[1mm]
		&\leq C(s,\sigma) \bigl(\tfrac{1}{2}+M\bigr)^{-s}\norm{f}_s, \nonumber
	\end{align}
	where in the second step we also used \eqref{eq:interpolationbnd} and \eqref{eq:est1}, and the final inequality is a consequence of~\eqref{eq:Tbound}. This completes the proof.
\end{proof}

\begin{remark}
	Theorem \ref{thm:est1} remains valid if $Q_M$ is replaced by the orthogonal projection $P_M$. Indeed, this claim can be proved by simply substituting $P_M$ for $Q_M$ in the proof of Theorem~\ref{thm:est1} accompanied by three simple modifications: replacing $\eqref{eq:diffNDest}$ by a (weaker) bound for the $L^2$ norm of $\Upsilon(\sigma)(\ident - F_M G_M) f$, resorting to the boundedness of $P_M: L^2 \to L^2$ in \eqref{eq:est1}, and using \eqref{eq:ortprojest} instead of \eqref{eq:interpolationbnd} in \eqref{eq:help_est}. Be that as it may, one can only approximately apply $P_M$ in connection to PEM measurements by employing $\{\theta_m\}_{\abs{m}\leq M}$ as quadrature nodes for the (periodic) trapezoidal rule when estimating the needed Fourier coefficients (cf.~\eqref{eq:P_M}), which corresponds to computing a discrete Fourier transformation. It is straightforward to check that such an approximative approach to applying $P_M$ is actually analogous to using the interpolation operator $Q_M$ to begin with. 
\end{remark}

If one restricts the attention to a finite family of preselected trigonometric current patterns, Theorem~\ref{thm:est1} leads to convergence of arbitrarily high order with respect to $M$ in $L^2$.  
\begin{corollary}
  \label{cor:exponential}
	For $M \in \mathbb{N}$, $n\in\mathbb{Z}\setminus\{0\}$, and $s>\frac{1}{2}$,
	\begin{equation*}
		\norm{(\Upsilon(\sigma) - P_\diamond Q_M \Upsilon(\sigma) F_M G_M)f_n}_{L^2} \leq C \left(\frac{\abs{n}}{\tfrac{1}{2}+M}\right)^s,
	\end{equation*}
	where the positive constant $C=C(s,\sigma)$ is independent of $n$ and $M$.
\end{corollary}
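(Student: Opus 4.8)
The plan is to leverage Theorem~\ref{thm:est1} together with the smoothing estimate \eqref{eq:Tbound}, exploiting the fact that a single trigonometric monomial $f_n$ has an explicit norm in every Sobolev space. Concretely, from \eqref{eq:expinner} (or directly from \eqref{eq:s_norm} and \eqref{eq:F_delta}-type computations) we have $\norm{f_n}_s = \sqrt{2\pi}\,\underline{n}^{s}$, and since $n \neq 0$ this is $\sqrt{2\pi}\,\abs{n}^s$. The operator in question, call it $E_M := \Upsilon(\sigma) - P_\diamond Q_M \Upsilon(\sigma) F_M G_M$, is bounded from $H^s_\diamond$ to $L^2_\diamond$ with norm at most $C(s,\sigma)(\tfrac12+M)^{-s}$ by Theorem~\ref{thm:est1}. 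Applying this bound to the particular element $f_n \in H^s_\diamond$ (note $f_n$ is indeed mean free because $n\neq 0$) immediately gives
\begin{equation*}
  \norm{E_M f_n}_{L^2} \leq C(s,\sigma)\bigl(\tfrac12+M\bigr)^{-s}\norm{f_n}_s = \sqrt{2\pi}\,C(s,\sigma)\left(\frac{\abs{n}}{\tfrac12+M}\right)^s,
\end{equation*}
which is exactly the claimed estimate after absorbing the $\sqrt{2\pi}$ into the constant.

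So the proof is essentially a one-line specialization of Theorem~\ref{thm:est1}. The only things to verify carefully are: (a) that $f_n \in H^s_\diamond$ for every $s$ (true since $f_n$ is smooth and $\widehat{f_n}(0) = 0$ when $n\neq 0$), and (b) the exact value of $\norm{f_n}_s$. For (b), plugging $g = f_n$ into \eqref{eq:s_norm} gives $\norm{f_n}_s^2 = 2\pi\,\underline{n}^{2s}\abs{\widehat{f_n}(n)}^2 = 2\pi\,\underline{n}^{2s}$ since $\widehat{f_n}(n) = 1$ and all other Fourier coefficients vanish; for $n\neq 0$ we have $\underline{n} = \abs{n}$, so $\norm{f_n}_s = \sqrt{2\pi}\,\abs{n}^s$. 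That is the whole content.

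There is no real obstacle here — the corollary is a direct corollary in the literal sense. The one mild subtlety worth a sentence is the interpretation of the result: it says that for a \emph{fixed} trigonometric current pattern $f_n$, the error decays like $M^{-s}$ for \emph{any} $s > \tfrac12$, hence faster than any fixed negative power of $M$ (the constant $C(s,\sigma)$ and the factor $\abs{n}^s$ blow up as $s\to\infty$, but for each fixed pattern one is free to choose $s$ as large as desired). If one wanted, one could also phrase the bound with $C$ depending additionally on an upper bound for $\abs{n}$ to make it uniform over a finite preselected family $\{f_n : 0 < \abs{n}\leq N\}$, recovering the "finite family of preselected trigonometric current patterns" phrasing from the text preceding the corollary; but as stated, keeping the explicit $\abs{n}^s$ factor is cleaner and makes the dependence transparent.
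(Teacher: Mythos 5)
Your proof is correct and is exactly the paper's argument: apply Theorem~\ref{thm:est1} to $f_n \in H^s_\diamond$ and use the identity $\norm{f_n}_s = \sqrt{2\pi}\,\abs{n}^s$ from \eqref{eq:expinner}, absorbing $\sqrt{2\pi}$ into the constant. Nothing further is needed.
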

\begin{proof}
	The result is a direct consequence of the identity $\norm{f_n}_s = \sqrt{2\pi}\abs{n}^s$ (cf.~\eqref{eq:expinner}) and Theorem~\ref{thm:est1}.
\end{proof}

Moreover, the approximating operator $P_\diamond Q_M \Upsilon(\sigma) F_M G_M$ inherits certain symmetry and monotonicity properties from $\Upsilon(\sigma)$, if one once again restricts the attention to trigonometric polynomials.
\begin{proposition}
	If $v,w\in \mathcal{T}^M_\diamond$ and $s>\frac{1}{2}$,
	\begin{equation*}
		\inner{v,P_\diamond Q_M \Upsilon(\sigma) F_M G_Mw}_{L^2} = \inner{F_M v, \Upsilon(\sigma)F_M w}_{-s,s}.			
	\end{equation*}
\end{proposition}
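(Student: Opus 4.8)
The plan is to unwind both sides to the level of the pointwise current injection operator $F_M$ and then exploit the symmetry of $\Upsilon(\sigma)$ together with the exactness of the trapezoidal quadrature on trigonometric polynomials. First I would recall that for $v,w\in\mathcal{T}^M_\diamond$ we have $G_M v = v$ and $G_M w = w$ by the remark after \eqref{eq:zeromean}, so that $F_M G_M w = F_M w$ and likewise for $v$. Hence the right-hand side is already $\inner{F_M v,\Upsilon(\sigma)F_M w}_{-s,s}$ with $F_M w\in H^{-s}_\diamond$ and $\Upsilon(\sigma)F_M w\in H^s_\diamond$ by \eqref{eq:Tbound}, so the dual bracket is well defined; the remaining task is to show the left-hand side equals this same quantity.

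Next I would rewrite the left-hand side. Since $\Upsilon(\sigma)F_M w\in H^s_\diamond$ is continuous (Sobolev embedding, $s>\tfrac12$), $Q_M$ is unambiguous, and because $v\in\mathcal{T}^M$ and $Q_M\Upsilon(\sigma)F_M w\in\mathcal{T}^M$, the exact quadrature rule \eqref{eq:quadrule} applies: $\inner{v,P_\diamond Q_M\Upsilon(\sigma)F_M w}_{L^2}$. Here one must note that $v\in\mathcal{T}^M_\diamond$ is mean free, so the orthogonal projection $P_\diamond$ can be dropped, $\inner{v,P_\diamond h}_{L^2}=\inner{P_\diamond v,h}_{L^2}=\inner{v,h}_{L^2}$ for any $h\in\mathcal{T}^M$. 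Thus the left side equals $\inner{v,Q_M\Upsilon(\sigma)F_M w}_{L^2}$. Applying \eqref{eq:quadrule} to the two trigonometric polynomials $v$ and $Q_M\Upsilon(\sigma)F_M w$, and using $Q_M g(\theta_j)=g(\theta_j)$, gives
\begin{equation*}
\inner{v,Q_M\Upsilon(\sigma)F_M w}_{L^2} = \frac{2\pi}{2M+1}\sum_{\abs{m}\leq M} v(\theta_m)\,\overline{\Upsilon(\sigma)F_M w(\theta_m)}.
\end{equation*}
Wait — the conjugate sits on the wrong factor for a direct comparison; I would instead apply \eqref{eq:quadrule} in the order $\inner{Q_M\Upsilon(\sigma)F_M w,v}_{L^2}$ and conjugate, or simply use that $\Upsilon(\sigma)F_M w$ is the relevant Dirichlet trace paired against the test pattern, being careful about which argument is conjugated; the cleanest route is to recognize $\frac{2\pi}{2M+1}\sum_{\abs{m}\leq M} g(\theta_m)\,\overline{h(\theta_m)}$ as $\inner{F_M g, h}_{-s,s}$ by the very definition of $F_M$ and of the dual bracket extending $\inner{\cdot,\cdot}_{L^2}$. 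Taking $g=v$, $h=\Upsilon(\sigma)F_M w$ (both legitimate since $v\in H^s$ and $\Upsilon(\sigma)F_M w\in H^s$), this identifies the left-hand side with $\inner{F_M v,\Upsilon(\sigma)F_M w}_{-s,s}$, which is exactly the claimed right-hand side.

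The only genuine subtlety — and the step I expect to need the most care — is bookkeeping the placement of complex conjugation in the sesquilinear pairings, ensuring that $F_M v$ (rather than $F_M$ applied to $\Upsilon(\sigma)F_M w$) lands in the first slot of $\inner{\cdot,\cdot}_{-s,s}$, and confirming that the dual bracket genuinely restricts to the quadrature sum on these arguments; this is immediate from $\delta_{\theta_m}\in H^{-s}$ for $s>\tfrac12$ together with $\inner{\delta_{\theta_m},h}_{-s,s}=\overline{h(\theta_m)}$ or $h(\theta_m)$ according to the adopted convention, and from $\inner{\cdot,\cdot}_{-s,s}$ extending $\inner{\cdot,\cdot}_{L^2}$ as stated after \eqref{eq:innerprod}. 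Everything else — dropping $P_\diamond$ against the mean-free $v$, dropping $G_M$ against $w\in\mathcal{T}^M_\diamond$, exactness of \eqref{eq:quadrule} on $\mathcal{T}^M$, and $Q_M$ being an interpolant — is a direct invocation of results already established in Sections~\ref{sec:approxuniform} and~\ref{sec:forward}. No estimates are needed; the statement is an exact algebraic identity.
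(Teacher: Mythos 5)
Your proof is correct and follows essentially the same route as the paper's: drop $G_M$ on $\mathcal{T}^M_\diamond$, drop $P_\diamond$ against the mean-free $v$, apply the exact quadrature rule \eqref{eq:quadrule} to the two trigonometric polynomials, and identify the resulting sum with $\inner{F_M v, \Upsilon(\sigma)F_M w}_{-s,s}$ via the definition of $F_M$. Your momentary worry about the conjugation is a false alarm — the sesquilinear bracket places the conjugate on the second slot exactly as the quadrature sum does — and you resolve it correctly.
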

\begin{proof}
	Since $v,w\in\mathcal{T}^M_\diamond$, it holds that $G_M w = w$ and we may also remove $P_\diamond$ from the inner product on the left-hand side of the assertion. By denoting $\tilde{w} = \Upsilon(\sigma)F_M w \in H^s$, what remains is 
	\begin{equation*}
		\inner{v,Q_M\tilde{w}}_{L^2} = \frac{2\pi}{2M+1}\sum_{\abs{m}\leq M} v(\theta_m)\overline{\tilde{w}(\theta_m)} = \inner{F_M v,\tilde{w}}_{-s,s},
	\end{equation*}
	where we used the definitions of $Q_M$ and $F_M$ combined with \eqref{eq:quadrule}.
\end{proof}

\section{Conformally mapped electrodes for a simply connected domain} \label{sec:general_domain}
In this section, the first aim is to generalize Theorem~\ref{thm:est1} to the case of a general smooth bounded simply connected domain, which is achieved with the help of the Riemann mapping theorem. Subsequently, we transfer the obtained result to the framework of the CEM by resorting to the material in \cite{Hanke2011b}.

As in Section~\ref{sec:forward},  let $\Omega$ be a smooth, bounded, and simply connected domain and assume $\sigma\in L^\infty(\Omega)$ satisfies the conditions in \eqref{eq:sigma_condition1}. According to the Riemann mapping theorem, there exists a bijective conformal mapping $\Phi: \Omega \to D$, whose restriction to $\partial \Omega$ defines a $C^\infty$-diffeomorphism between $\partial \Omega$ and $\partial D$ \cite{Pommerenke1992}. The inverse of $\Phi$ is denoted by $\Psi := \Phi^{-1}$, and its complex derivative is~$\Psi'$. We introduce $2M+1$ point electrodes (or electrode midpoints) on $\partial \Omega$ via $y_m := \Psi(x_m)$, $\abs{m}\leq M$, with the interpolation points $\{x_m\}_{\abs{m}\leq M} \subset \partial D$ defined by \eqref{eq:pointe_D}. 

\subsection{PEM and a general domain}
Analogously to the case of $D$ in the previous section, the aim is to drive a continuum current pattern $f \in H^s_{\diamond}(\partial \Omega)$, $s>\tfrac{1}{2}$, through $\partial \Omega$ and measure the corresponding relative boundary potential everywhere on $\partial \Omega$. However, due to practical restrictions, the measurements are performed with $2M+1$ infinitesimal electrodes at the positions $\{y_m\}_{\abs{m}\leq M} \subset \partial \Omega$. Our method for approximating the smooth output $\Upsilon(\sigma)f$ of the CM is as follows:
\begin{enumerate}[(i)]
\item Introduce
	\begin{equation}\label{eq:explicitIO}
	I = \frac{2 \pi}{2 M+1} \left( 
	\begin{bmatrix}
    	\abs{\Psi'(x_{-M})} \, f(y_{-M}) \\
    	\vdots \\
    	\abs{\Psi'(x_{M})} \, f(y_{M})
  	\end{bmatrix}
	-\frac{1}{2 M +1} \sum_{\abs{m}\leq M} \abs{\Psi'(x_{m})}\,f(y_{m}) \, \mathbf{1} \right) \in \C_\diamond^{2 M+1}
	\end{equation}
	as the electrode current pattern for the PEM.
	\item Perform the measurements of the PEM to retrieve $U = \Upsilon_{\rm PEM}(\sigma) I \in \C_\diamond^{2 M +1}$.
	\item Build the trigonometric interpolant
	\begin{equation}\label{eq:tildeg}
		\tilde{g} = \hat{Q}_M U \in \mathcal{T}_\diamond^M
	\end{equation}
	on $\partial D$ with respect to the equidistant interpolation points  $\{x_m\}_{\abs{m}\leq M}$.
	\item Form an approximation for the continuum boundary potential as $g = \tilde{g} \circ \Phi + c$, where $c\in\C$ is chosen so that the smooth function $g$ has vanishing mean.
\end{enumerate}
The above construction obviously defines a bounded linear operator
\begin{equation*}
	\Upsilon_{M}(\sigma):
	\begin{cases}
		f \mapsto g, \\[1mm]
		H_\diamond^s(\partial \Omega) \to H_\diamond^t(\partial \Omega),
	\end{cases}
\end{equation*}
for any $s > \tfrac{1}{2}$ and $t \in \R$. According to the following theorem, this newly introduced operator gives an approximation for the measurement map of the CM.

\begin{theorem} \label{thm:main}
	For $M\in \mathbb{N}$ and $s>\frac{1}{2}$, 
	\begin{equation*}
		\norm{\Upsilon(\sigma) - \Upsilon_{M}(\sigma)}_{\mathscr{L}(H^s_\diamond(\partial \Omega), L^2_\diamond(\partial \Omega))} \leq C \bigl(\tfrac{1}{2}+M \bigr)^{-s}, 
	\end{equation*}
	where the positive constant $C = C(s,\sigma, \Phi)$ is independent of $M$.
\end{theorem}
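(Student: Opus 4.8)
The plan is to reduce Theorem~\ref{thm:main} to Theorem~\ref{thm:est1} by pulling everything back to the unit disk along the conformal map $\Psi=\Phi^{-1}\colon D\to\Omega$, exploiting that the conductivity equation for a scalar coefficient and the Dirichlet form are conformally invariant in two dimensions. Set $\tilde\sigma:=\sigma\circ\Psi\in L^\infty(D)$; then $\tilde\sigma$ satisfies \eqref{eq:sigma_condition1} on $D$, the bounds on $\redel(\tilde\sigma)$ being inherited verbatim and $\suppm(\tilde\sigma-1)$ staying compactly inside $D$ because $\Phi$ restricts to a $C^\infty$-diffeomorphism of the boundaries. The first step is to record the boundary transformation rules. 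Let $|\Psi'|$ denote the smooth, strictly positive boundary Jacobian, characterised by $\di s_{\partial\Omega}=|\Psi'|\,\di s_{\partial D}$, and define the ``current pullback'' $J\colon f\mapsto |\Psi'|\,(f\circ\Psi)$. Writing $\phi=\psi\circ\Phi$ one has $\inner{f,\phi}_{\partial\Omega}=\inner{Jf,\psi}_{\partial D}$, from which it follows that $J$ is an isomorphism of $H^s(\partial\Omega)$ onto $H^s(\partial D)$ for every $s\in\R$ (composition with a smooth diffeomorphism and multiplication by a smooth positive function are bounded on each $H^s$), that it preserves the zero-mean condition since $\int_{\partial D}Jf\,\di s=\int_{\partial\Omega}f\,\di s$, and that it sends each $\delta_{y_m}$ to $\delta_{x_m}$. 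From the weak formulation, if $u$ solves \eqref{eq:PDE}--\eqref{eq:Neumann} on $\Omega$ with datum $f$, then $u\circ\Psi$ solves the conductivity equation on $D$ with coefficient $\tilde\sigma$ and Neumann datum $Jf$; because $\sigma\equiv1$ near $\partial\Omega$ pulls back to $\tilde\sigma\equiv1$ near $\partial D$, the relative maps transform cleanly:
\[
\Upsilon(\sigma)f \;=\; \bigl(\Upsilon(\tilde\sigma)\,Jf\bigr)\circ\Phi \;+\; c_0 ,
\]
with a constant $c_0\in\C$ accounting for the two distinct mean-free normalisations of the boundary potentials. By \eqref{eq:Tbound} this identity also holds when $f$ is a mean-free linear combination of Dirac masses.

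The heart of the matter is the second step: checking that the construction defining $\Upsilon_M(\sigma)$ is, in the disk coordinates, precisely the procedure of Section~\ref{sec:unit_disk} applied to $\tilde\sigma$ and $\tilde f:=Jf$. Two observations suffice. (a) A point current $f_I=\sum_m I_m\delta_{y_m}$ on $\partial\Omega$ satisfies $Jf_I=\sum_m I_m\delta_{x_m}$, i.e.\ its representation acquires no Jacobian factor; consequently, evaluating the previous display at $y=y_m$ shows that $\Upsilon(\sigma)f_I(y_m)-\Upsilon(\tilde\sigma)(Jf_I)(x_m)$ is a constant independent of $m$, so that $\Upsilon_{\rm PEM}(\sigma)I$ on $\partial\Omega$ coincides — after the mean-free renormalisation of the electrode potentials — with the vector produced by the disk PEM with electrodes at $\{x_m\}_{\abs{m}\leq M}$ and conductivity $\tilde\sigma$, both fed the same $I$. (b) The numbers $|\Psi'(x_m)|\,f(y_m)$ appearing in \eqref{eq:explicitIO} are exactly the values $\tilde f(x_m)$, so the current pattern \eqref{eq:explicitIO} is the disk pattern \eqref{eq:explicitI} built from $\tilde f$. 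Threading (a)--(b) through steps (iii)--(iv) of the algorithm and invoking the identity \eqref{eq:equivalence} (with $\tilde\sigma,\tilde f$ in place of $\sigma,f$) gives
\[
\Upsilon_M(\sigma)f \;=\; \bigl(P_\diamond Q_M\,\Upsilon(\tilde\sigma)\,F_M G_M\,\tilde f\bigr)\circ\Phi \;+\; c_1 ,
\]
the constant $c_1\in\C$ being immaterial since step (iv) re-centres $g$ to have vanishing mean on $\partial\Omega$.

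Subtracting the two displays, $\Upsilon(\sigma)f-\Upsilon_M(\sigma)f = h\circ\Phi + (c_0-c_1)$ with $h:=\bigl(\Upsilon(\tilde\sigma)-P_\diamond Q_M\Upsilon(\tilde\sigma)F_MG_M\bigr)\tilde f\in L^2_\diamond(\partial D)$. Since the left-hand side lies in $L^2_\diamond(\partial\Omega)$, subtracting the constant only decreases the $L^2(\partial\Omega)$-norm, whence $\norm{\Upsilon(\sigma)f-\Upsilon_M(\sigma)f}_{L^2(\partial\Omega)}\leq\norm{h\circ\Phi}_{L^2(\partial\Omega)}\leq\norm{\,|\Psi'|\,}_{L^\infty(\partial D)}^{1/2}\norm{h}_{L^2(\partial D)}$ by the arclength identity. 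Now Theorem~\ref{thm:est1} applied to $\tilde\sigma$ bounds $\norm{h}_{L^2(\partial D)}$ by $C(s,\tilde\sigma)\bigl(\tfrac12+M\bigr)^{-s}\norm{\tilde f}_{H^s(\partial D)}$, and $\norm{\tilde f}_{H^s(\partial D)}=\norm{Jf}_{H^s(\partial D)}\leq C(s,\Phi)\norm{f}_{H^s(\partial\Omega)}$. As $\tilde\sigma$, $|\Psi'|$ and $\Phi|_{\partial\Omega}$ are all determined by $\sigma$ and $\Phi$, the combined constant has the asserted form $C(s,\sigma,\Phi)$, finishing the proof.

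The step I expect to be the main obstacle is the verification in (a)--(b): that the algorithm on $\Omega$ transforms, \emph{constants and ground levels of potential included}, into the disk algorithm applied to $\tilde f$. In particular one must check that the weighting by $|\Psi'(x_m)|$ built into \eqref{eq:explicitIO} is exactly what cancels the Jacobian in the change of variables for point masses, so that the finite-dimensional PEM data on $\partial\Omega$ and on $\partial D$ literally agree, and that the three separate mean-free normalisations occurring in the procedure (of the electrode potentials, of the trigonometric interpolant, and of the final boundary potential $g$) differ from the clean conformal pullback only by additive constants, which are invisible in the final $L^2_\diamond(\partial\Omega)$ estimate.
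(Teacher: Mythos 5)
Your proposal is correct and follows essentially the same route as the paper: pull everything back to the unit disk via the conformal map, observe that the weighting by $\abs{\Psi'(x_m)}$ in \eqref{eq:explicitIO} makes the electrode current pattern coincide with \eqref{eq:explicitI} applied to $\tilde f = \abs{\Psi'}(f\circ\Psi)$ while Dirac masses transform without a Jacobian factor, apply Theorem~\ref{thm:est1} to $\tilde\sigma = \sigma\circ\Psi$, and transfer the $L^2$ bound back using the smoothness of $\Phi|_{\partial\Omega}$ together with the fact that the mean-free representative realizes the quotient norm. The only cosmetic difference is that the paper phrases the reduction through the explicit interior solutions $u_\sigma$, $\tilde u_{\tilde\sigma}$, $w_\sigma$, $\tilde w_{\tilde\sigma}$ rather than through your pullback operator $J$, but the content is identical.
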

\begin{proof}
	Let $s>\tfrac{1}{2}$ and $f \in H^{s}_\diamond(\partial \Omega)$ be arbitrary but fixed, and let $I \in \C_\diamond^{2M+1}$ be defined by \eqref{eq:explicitIO}.  Moreover, let $u_\sigma \in H^1(\Omega)/\C$ be the solution to \eqref{eq:PDE} with  the Neumann boundary condition \eqref{eq:Neumann}. Then $\tilde{u}_{\tilde{\sigma}} := u_\sigma \circ \Psi \in H^1(D)/\C$ is the unique solution of
	\begin{equation}\label{eq:PDE_D2}
		\nabla \cdot (\tilde{\sigma} \nabla \tilde{u}) = 0 \quad \text{in } D, \qquad \frac{\partial \tilde{u}}{\partial \nu} = \tilde{f}  \quad \text{on } \partial D,
	\end{equation}
	where $\tilde{f} := \abs{\Psi'} (f \circ \Psi)$ and $\tilde{\sigma} = \sigma \circ \Psi$; see,~e.g.,~\cite[Lemma~4.1 and Remark~4.1]{Hyvonen2018}. On the other hand, if $w_\sigma \in H^r(\Omega)/\C$, $r < 1$, is the solution to the PEM forward problem 
	\begin{equation}\label{eq:PDE_O}
		\nabla \cdot (\sigma \nabla w) = 0 \quad \text{in } \Omega, \qquad \frac{\partial w}{\partial \nu} =  \sum_{\abs{m}\leq M} I_m  \delta_{y_m} \quad \text{on } \partial \Omega,
	\end{equation}
	then $\tilde{w}_{\tilde{\sigma}} := w_\sigma \circ \Psi\in H^r(D)/\C$, $r < 1$, is the unique solution to
	\begin{equation}\label{eq:PDE_D3}
		\nabla \cdot (\tilde{\sigma} \nabla \tilde{w}) = 0 \quad \text{in } D, \qquad \frac{\partial \tilde{w}}{\partial \nu} =  \sum_{\abs{m}\leq M} I_m  \delta_{x_m} \quad \text{on } \partial D,
	\end{equation}
	because Neumann boundary values involving Dirac delta distributions transform naturally under conformal maps; see the proof of \cite[Theorem~3.2]{Hakula2011}. The potentials $u_1$, $\tilde{u}_1$, $w_1$ and $\tilde{w}_1$ corresponding to the unit conductivity are defined by setting $\sigma \equiv 1$ in the preceding formulas.

	Let $\tilde{\Upsilon}(\tilde{\sigma}): H_\diamond^{-t}(\partial D) \to H_\diamond^{t}(\partial D)$, $t \in \R$, be the relative CM boundary map corresponding to $D$ and $\tilde{\sigma}$ (cf.~\eqref{eq:T}). Obviously,
	\begin{equation*}
		\tilde{\Upsilon}(\tilde{\sigma}) \tilde{f} = (\tilde{u}_{\tilde{\sigma}} - \tilde{u}_1)|_{\partial D} + c_1
	\end{equation*}
	and
	\begin{equation*}
		\tilde{\Upsilon}(\tilde{\sigma}) F_M G_M \tilde{f} = (\tilde{w}_{\tilde{\sigma}} - \tilde{w}_1)|_{\partial D} + c_2,
	\end{equation*}
	where $c_1, c_2 \in \C$ are chosen so that $\tilde{\Upsilon}(\tilde{\sigma}) \tilde{f}$ and $\tilde{\Upsilon}(\tilde{\sigma}) F_M G_M \tilde{f}$ have vanishing means.
	Comparing \eqref{eq:explicitIO} and \eqref{eq:PDE_D3} with the construction in Section~\ref{sec:unit_disk}, it easily follows that $\tilde{g}$ from \eqref{eq:tildeg} satisfies
	\begin{equation*}
		\tilde{g} = P_\diamond Q_M \tilde{\Upsilon}(\tilde{\sigma}) F_M G_M \tilde{f}.
	\end{equation*}
	In consequence, Theorem~\ref{thm:est1} yields
	\begin{align}\label{eq:conform}
		\norm{(\tilde{u}_{\tilde{\sigma}} - \tilde{u}_1) - \tilde{g}}_{L^2(\partial D) / \C} &=  \norm{(\tilde{\Upsilon}(\tilde{\sigma}) - P_\diamond Q_M \tilde{\Upsilon}(\tilde{\sigma}) F_M G_M) \tilde{f}}_{L^2(\partial D) / \C} \nonumber \\[1mm]
		&\leq C(s, \tilde{\sigma}) \bigl(\tfrac{1}{2}+M \bigr)^{-s} \norm{\tilde{f}}_{H^s(\partial D)} \nonumber \\[1mm]
		& \leq C(s, \sigma, \Phi) \bigl(\tfrac{1}{2}+M \bigr)^{-s} \norm{f}_{H^s(\partial \Omega)},
	\end{align}
	where the final step directly follows for integer $s$ from the smoothness of $\Psi|_{\partial D}: \partial D \to \partial \Omega$ and the definitions of the norms $\norm{\cdot}_{H^s(\partial D)}$ and $\norm{\cdot}_{H^s(\partial \Omega)}$. For non-integer $s$, it can be deduced,~e.g.,~via interpolation of Sobolev spaces~\cite{Lions1972}.
	
	Since $u_\sigma = \tilde{u}_{\tilde{\sigma}} \circ \Phi$ and $u_1 = \tilde{u}_{1} \circ \Phi$ modulo additive constants, we may write
	\begin{align*}
		\norm{(\Upsilon(\sigma) - \Upsilon_M(\sigma)) f}_{L^2(\partial \Omega)/\C} &= \norm{(u_\sigma - u_1) - g}_{L^2(\partial \Omega)/\C} \\[1mm]
		&= \big\| \bigl( (\tilde{u}_{\tilde{\sigma}} - \tilde{u}_1) - \tilde{g} \bigr) \circ \Phi \big\|_{L^2(\partial \Omega)/\C} \\[1mm]
		&\leq C(\Phi) \norm{(\tilde{u}_{\tilde{\sigma}} - \tilde{u}_1) - \tilde{g}}_{L^2(\partial D) / \C} \\[1mm]
		&\leq C(s, \sigma, \Phi) \bigl(\tfrac{1}{2}+M \bigr)^{-s} \norm{f}_{H^s(\partial \Omega)},
	\end{align*}
	where the last step is simply \eqref{eq:conform}. Since the mean free representative of an equivalence class in $L^2(\partial \Omega)/\C$ realizes the corresponding quotient norm and $(\Upsilon(\sigma) - \Upsilon_M(\sigma)) f \in L^{2}_\diamond(\partial \Omega)$ by the definitions of $\Upsilon(\sigma)$ and $\Upsilon_M(\sigma)$, we in fact have
	\begin{equation*}
		\norm{(\Upsilon(\sigma) - \Upsilon_M(\sigma)) f}_{L^2(\partial \Omega)} \leq C(s, \sigma, \Phi) \bigl(\tfrac{1}{2}+M \bigr)^{-s} \norm{f}_{H^s(\partial \Omega)}.
	\end{equation*}
	As $f \in H^s_{\diamond}(\partial \Omega)$ was chosen arbitrarily, this completes the proof.
\end{proof}

\subsection{CEM and a general domain}

To complete this section, let us consider the discrepancy between the CM and the CEM. We choose the midpoints of the finite-sized electrodes $\{ E_{m} \}_{\abs{m}\leq M}$ for the CEM to be the conformal images $\{ y_m \}_{\abs{m} \leq M}$ of the equidistant interpolation points $\{ x_m \}_{\abs{m} \leq M}$ on $\partial D$. Moreover, the lengths of the electrode patches along $\partial \Omega$ are assumed to satisfy
\begin{equation} \label{eq:el_length}
	c_\textup{E} d \leq \abs{E_m} \leq C_\textup{E} d, \qquad \abs{m}\leq M,
\end{equation}
where $c_\textup{E}, C_\textup{E} > 0$ are positive constants independent of $M$, $d \in (0, d_0)$ is a parameter controlling the widths of the electrodes, and $d_0 = d_0(M,\Omega, \Phi)> 0$ is chosen such that the electrodes cannot overlap. We define the mapping
\begin{equation}
  \label{eq:hatUpsilon}
	\hat{\Upsilon}_{M}(\sigma):
	\begin{cases}
		f \mapsto g, \\[1mm]
		H_\diamond^s(\partial \Omega) \to H_\diamond^t(\partial \Omega), \quad s > \tfrac{1}{2}, \ t \in \R,
	\end{cases}
\end{equation}
by replacing $\Upsilon_{\rm PEM}(\sigma)$ with $\Upsilon_{\rm CEM}(\sigma)$ defined by \eqref{eq:Upsilon_CEM} in the construction of $\Upsilon_M(\sigma)$. In other words, the relative measurements of the realistic CEM are used when forming an approximation for $\Upsilon(\sigma)$ instead of those of the less practical PEM. 

In particular, by choosing $d = \mathcal{O}((\frac{1}{2}+M)^{-s/2})$, i.e.\ suitably shrinking the electrodes when more electrodes are introduced, we obtain the same estimate for the CEM as for the PEM in Theorem~\ref{thm:main}. This is an immediate consequence of the following result.

\begin{corollary} \label{cor:main}
	For $M\in \mathbb{N}$ and $s>\frac{1}{2}$, 
	\begin{equation*}
		\norm{\Upsilon(\sigma) - \hat{\Upsilon}_{M}(\sigma)}_{\mathscr{L}(H^s_\diamond(\partial \Omega), L^2_\diamond(\partial \Omega))} \leq C \bigl(\bigl(\tfrac{1}{2}+M \bigr)^{-s} + d^2\bigr), 
	\end{equation*}
	where the positive constant $C = C(s,\sigma,z,\Phi,\Omega)$ is independent of $M$ and $d \in (0, d_0)$.
\end{corollary}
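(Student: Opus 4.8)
The plan is to derive Corollary~\ref{cor:main} by a triangle inequality that inserts the PEM-based approximation $\Upsilon_M(\sigma)$ between $\Upsilon(\sigma)$ and $\hat{\Upsilon}_M(\sigma)$:
\begin{equation*}
  \norm{\Upsilon(\sigma) - \hat{\Upsilon}_{M}(\sigma)}_{\mathscr{L}(H^s_\diamond(\partial \Omega), L^2_\diamond(\partial \Omega))} \leq \norm{\Upsilon(\sigma) - \Upsilon_{M}(\sigma)}_{\mathscr{L}(H^s_\diamond(\partial \Omega), L^2_\diamond(\partial \Omega))} + \norm{\Upsilon_{M}(\sigma) - \hat{\Upsilon}_{M}(\sigma)}_{\mathscr{L}(H^s_\diamond(\partial \Omega), L^2_\diamond(\partial \Omega))}.
\end{equation*}
The first term is already bounded by $C(s,\sigma,\Phi)(\tfrac12+M)^{-s}$ by Theorem~\ref{thm:main}, so the whole task reduces to estimating the second term, i.e.\ the discrepancy between the PEM-based and the CEM-based postprocessed operators, by $C d^2$ with a constant independent of $M$ and $d$.

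For the second term I would unwind the two constructions: $\Upsilon_M(\sigma)$ and $\hat{\Upsilon}_M(\sigma)$ differ only in step (ii), where $\Upsilon_{\rm PEM}(\sigma)I$ is replaced by $\Upsilon_{\rm CEM}(\sigma)I$, while the input current $I = I(f)$ from \eqref{eq:explicitIO}, the interpolation map $\hat{Q}_M$, and the final pullback by $\Phi$ are identical. Hence $(\Upsilon_M(\sigma) - \hat{\Upsilon}_M(\sigma))f = (\hat{Q}_M(\Upsilon_{\rm PEM}(\sigma) - \Upsilon_{\rm CEM}(\sigma))I)\circ\Phi$ modulo an additive constant that is killed by the mean-free projection; since the mean-free representative realizes the quotient norm, we may drop the constant. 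Using that $\Phi|_{\partial\Omega}$ is a fixed $C^\infty$-diffeomorphism (so composition with it is bounded $L^2(\partial D)\to L^2(\partial\Omega)$ with an $M$-independent constant) and the norm bound $\norm{\hat{Q}_M}_{\mathscr{L}(\C^{2M+1},L^2)} = \sqrt{2\pi/(2M+1)}$ from \eqref{eq:hatQnorm}, we get
\begin{equation*}
  \norm{(\Upsilon_M(\sigma) - \hat{\Upsilon}_M(\sigma))f}_{L^2(\partial\Omega)} \leq C(\Phi)\sqrt{\tfrac{2\pi}{2M+1}} \, \bigl|(\Upsilon_{\rm PEM}(\sigma) - \Upsilon_{\rm CEM}(\sigma))I\bigr|,
\end{equation*}
where $|\cdot|$ is the Euclidean norm on $\C^{2M+1}$. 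It then remains to (a) bound $|(\Upsilon_{\rm PEM}(\sigma) - \Upsilon_{\rm CEM}(\sigma))I|$ in terms of $d^2$ and $|I|$ using the PEM--CEM comparison estimate of \cite{Hanke2011b} (together with the appendix clarifying the $M$-dependence of the constants there, and using \eqref{eq:el_length} so that the maximal electrode diameter is $\mathcal{O}(d)$ uniformly in $M$), and (b) bound $|I|$ in terms of $\norm{f}_{H^s(\partial\Omega)}$. For (b), from \eqref{eq:explicitIO} one has $|I| \leq C(M)^{-1/2}\,\sup_{\partial\Omega}|\Psi'|\,\sup_{\partial\Omega}|f|$ — more precisely $|I| = \norm{F_M G_M \tilde f}$-type quantity times a factor, but the cleanest route is: $I$ is the vector $[\,\tfrac{2\pi}{2M+1}(G_M\tilde f)(\theta_m)\,]_{|m|\le M}$ with $\tilde f = |\Psi'|(f\circ\Psi)$, so $|I| = \tfrac{2\pi}{2M+1}\bigl(\sum_{|m|\le M}|(G_M\tilde f)(\theta_m)|^2\bigr)^{1/2} \leq \sqrt{\tfrac{2\pi}{2M+1}}\,\sqrt{2\pi}\,\sup_{\theta}|(G_M\tilde f)(\theta)| \leq C\sqrt{\tfrac{1}{2M+1}}\,\norm{\tilde f}_{H^s(\partial D)} \leq C(s,\Phi)\sqrt{\tfrac{1}{2M+1}}\,\norm{f}_{H^s(\partial\Omega)}$, using \eqref{eq:Gbnd}, the Sobolev embedding $H^s\hookrightarrow C$, and the smoothness of $\Psi|_{\partial D}$. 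Combining this with (a) makes the two factors of $\sqrt{1/(2M+1)}$ and the factor $2M+1$ hidden in the $\mathscr{L}$-norm of the PEM--CEM difference cancel, leaving a clean $C(s,\sigma,z,\Phi,\Omega)\,d^2\,\norm{f}_{H^s(\partial\Omega)}$ bound.

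The main obstacle is step (a): the estimate of \cite{Hanke2011b} is stated for a fixed electrode configuration, and one must be careful that the constant multiplying $d^2$ does not secretly blow up with $M$ (for instance through the number of electrodes, the minimal gap between them, or the contact impedances). This is exactly what the appendix to the present paper is meant to address, so I would invoke it to assert that, under the geometric normalization \eqref{eq:el_length} with $d\in(0,d_0(M,\Omega,\Phi))$ and with uniformly bounded (and uniformly positive-real-part) contact impedances $z$, one has $\norm{\Upsilon_{\rm PEM}(\sigma) - \Upsilon_{\rm CEM}(\sigma)}_{\mathscr{L}(\C^{2M+1}_\diamond)} \leq C(\sigma,z,\Phi,\Omega)\,(2M+1)\,d^2$, where the factor $2M+1$ is precisely the one that gets absorbed by the two $\hat{Q}_M$-type normalizations above; the rest is bookkeeping. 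Finally, adding the Theorem~\ref{thm:main} bound gives the claimed $C((\tfrac12+M)^{-s} + d^2)$, and taking the supremum over $f\in H^s_\diamond(\partial\Omega)$ with unit norm completes the proof.
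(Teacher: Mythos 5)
Your proposal is correct and follows essentially the same route as the paper's own proof: the triangle inequality through $\Upsilon_M(\sigma)$, Theorem~\ref{thm:main} for the first term, the norm identity \eqref{eq:hatQnorm} for $\hat{Q}_M$, the appendix's $\mathcal{O}(M d^2)$ version of the PEM--CEM comparison, and the Sobolev-embedding bound $\abs{I} \leq C(s,\Phi)(2M+1)^{-1/2}\norm{f}_{H^s(\partial\Omega)}$, with the same cancellation of the $M$-factors. No gaps.
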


\begin{proof}
	To begin with, we write
	\begin{align} \label{eq:final_estim}
		\norm{\Upsilon(\sigma) - \hat{\Upsilon}_{M}(\sigma)}_{\mathscr{L}(H^s_\diamond(\partial \Omega), L^2_\diamond(\partial \Omega))} &\leq \norm{\Upsilon(\sigma) - \Upsilon_{M}(\sigma)}_{\mathscr{L}(H^s_\diamond(\partial \Omega), L^2_\diamond(\partial \Omega))} \notag \\[1mm]
		& \quad \quad + \norm{\Upsilon_M(\sigma) - \hat{\Upsilon}_{M}(\sigma)}_{\mathscr{L}(H^s_\diamond(\partial \Omega), L^2_\diamond(\partial \Omega))} \\[1mm]
		& \leq C(s,\sigma, \Phi) \bigl(\tfrac{1}{2}+M \bigr)^{-s}  + \norm{\Upsilon_M(\sigma) - \hat{\Upsilon}_{M}(\sigma)}_{\mathscr{L}(H^s_\diamond(\partial \Omega), L^2_\diamond(\partial \Omega))} \notag
	\end{align}
	due to Theorem~\ref{thm:main}. By virtue of \eqref{eq:hatQnorm} and Theorem~\ref{thm:M-point} (cf.~\cite[Corollary~2.1]{Hanke2011b}),
	\begin{align} \label{eq:final_estim2}
		\norm{( \Upsilon_M(\sigma) - \hat{\Upsilon}_{M}(\sigma)) f}_{L^2_\diamond(\partial \Omega)} & =  \norm{ \hat{Q}_M (\Upsilon_{\rm PEM}(\sigma) - \Upsilon_{\rm CEM}(\sigma)) I \circ \Phi}_{L^2(\partial \Omega) /\C} \nonumber \\[1mm]
		&\leq C(\sigma,z,\Phi,\Omega) M^{1/2} d^2 \abs{I},
	\end{align}
	where $I$ is defined by \eqref{eq:explicitI} and $\abs{I}$ denotes its Euclidean norm. Applying the Sobolev embedding theorem to \eqref{eq:explicitI}, it straightforwardly follows that
	\begin{equation*}
		\abs{I}^2 \leq \frac{C(s, \Phi)}{2M + 1} \norm{f}_{H^s(\partial \Omega)}^2.
	\end{equation*}
	Combining this with \eqref{eq:final_estim} and \eqref{eq:final_estim2} completes the proof.
\end{proof}

\begin{remark}
	An advantage of using the estimate for the PEM from Theorem~\ref{thm:main} to obtain the result in Corollary~\ref{cor:main} for the CEM is that we only use the conformal mapping to determine the midpoints of the electrodes on  $\partial \Omega$. If one were to conformally map CEM electrodes from $\partial D$ to $\partial\Omega$, there would be unwanted shrinking and stretching of the corresponding electrodes on $\partial \Omega$.
\end{remark}

\section{Numerical examples}
\label{sec:numer}
This section presents two numerical examples that verify the convergence rates in Corollaries~\ref{cor:main} and \ref{cor:exponential}, respectively, for simple conductivities in the unit disk. 

\begin{example}
  \label{ex:first}
  Let our domain of interest be the unit disk, i.e.\ $\Omega = D$, and assume that the conductivity inside $D$ is of the form
  \begin{equation}
    \label{eq:cond_con}
  \varsigma :=
\left\{
\begin{array}{ll}
  1 & \text{in } D \setminus \overline{D_{0,R}}, \\[1mm]
  \kappa & \text{in } D_{0,R}
\end{array}
\right.
\end{equation}
where $\kappa \in \R_+$ is a positive constant and $D_{0,R}$ is the origin-centered open disk of radius $0 < R < 1$. It is obvious that $\varsigma$ satisfies the conditions \eqref{eq:sigma_condition1}.

Let $\Phi: D \to D$ be the identity map, which means that the midpoints of the electrodes $y_m = \Psi(x_m) = x_m$, $|m| \leq M$, are the interpolation points defined by \eqref{eq:pointe_D}.  We introduce two sequences of CEM-based approximating operators $\hat{\Upsilon}_M(\varsigma)$ defined by \eqref{eq:hatUpsilon}; they correspond to different choices for the dependence of the common width $d_M := |E_{-M}| = \cdots = |E_{M}|$ of the employed electrodes on their number:
\begin{equation}
  \label{eq:elec1}
d_M = \frac{\pi}{2M +1} = \mathcal{O}(M^{-1}), \qquad M \in \N,
\end{equation}
and
\begin{equation}
  \label{eq:elec2}
d_M = \frac{\pi}{M(2M + 1)} = \mathcal{O}(M^{-2}), \qquad M \in \N.
\end{equation}
In the first case, the $2M +1$ electrodes always cover fifty percent of the object boundary, whereas the area covered by the electrodes is inversely proportional to $M$ in the latter construction. According to Corollary~\ref{cor:main}, these choices lead to the asymptotic convergence rate
\begin{equation} \label{eq:hypothesis}
	\norm{\Upsilon(\varsigma) - \hat{\Upsilon}_{M}(\varsigma)}_{\mathscr{L}(H^s_\diamond(\partial D), L^2_\diamond(\partial D))} = \mathcal{O}(M^{-s}),
\end{equation}
with $s = 2$ for \eqref{eq:elec1} and $s=4$ for \eqref{eq:elec2}.

\begin{figure}[htb]
  \includegraphics[width=70mm]{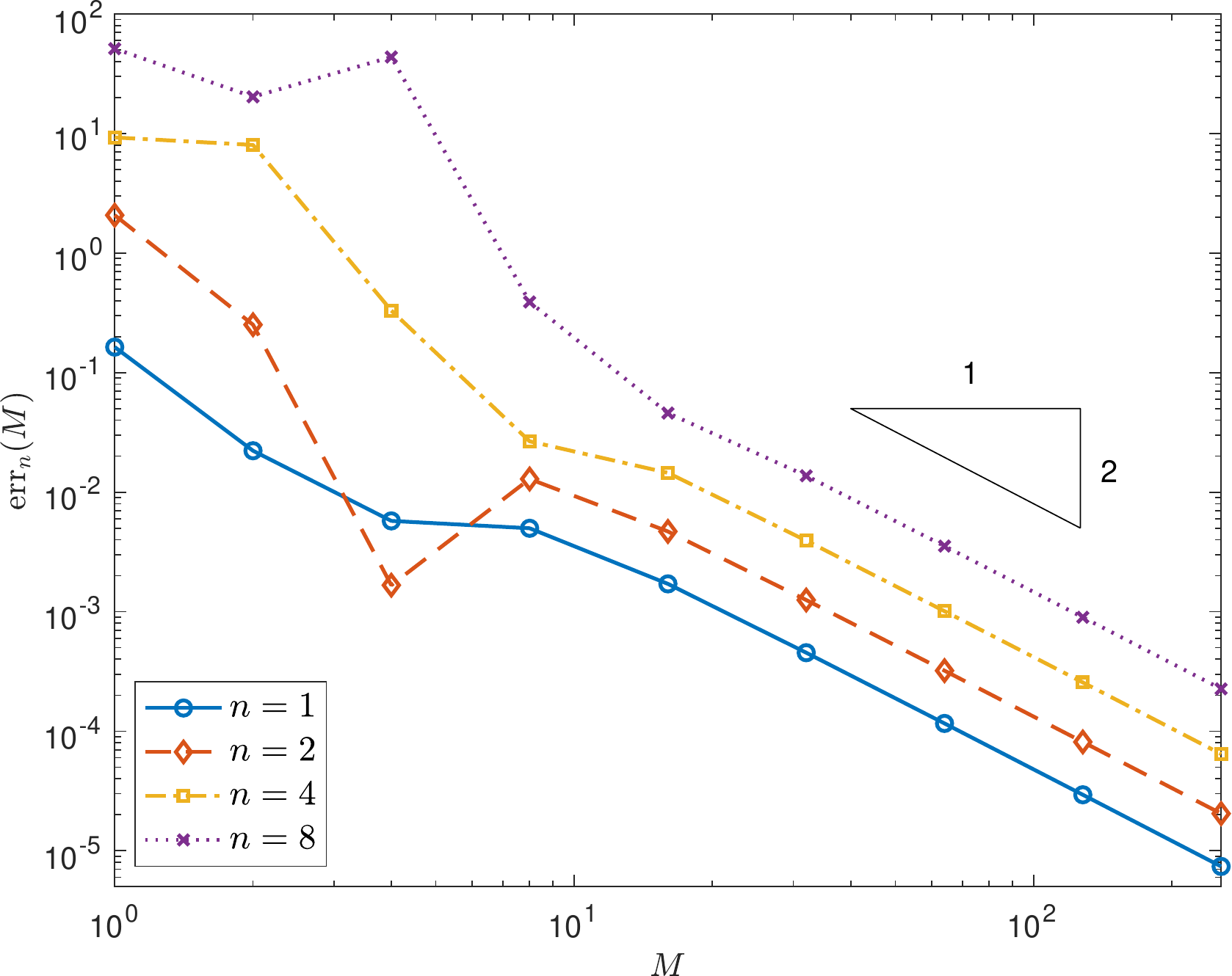} \qquad \includegraphics[width=70mm]{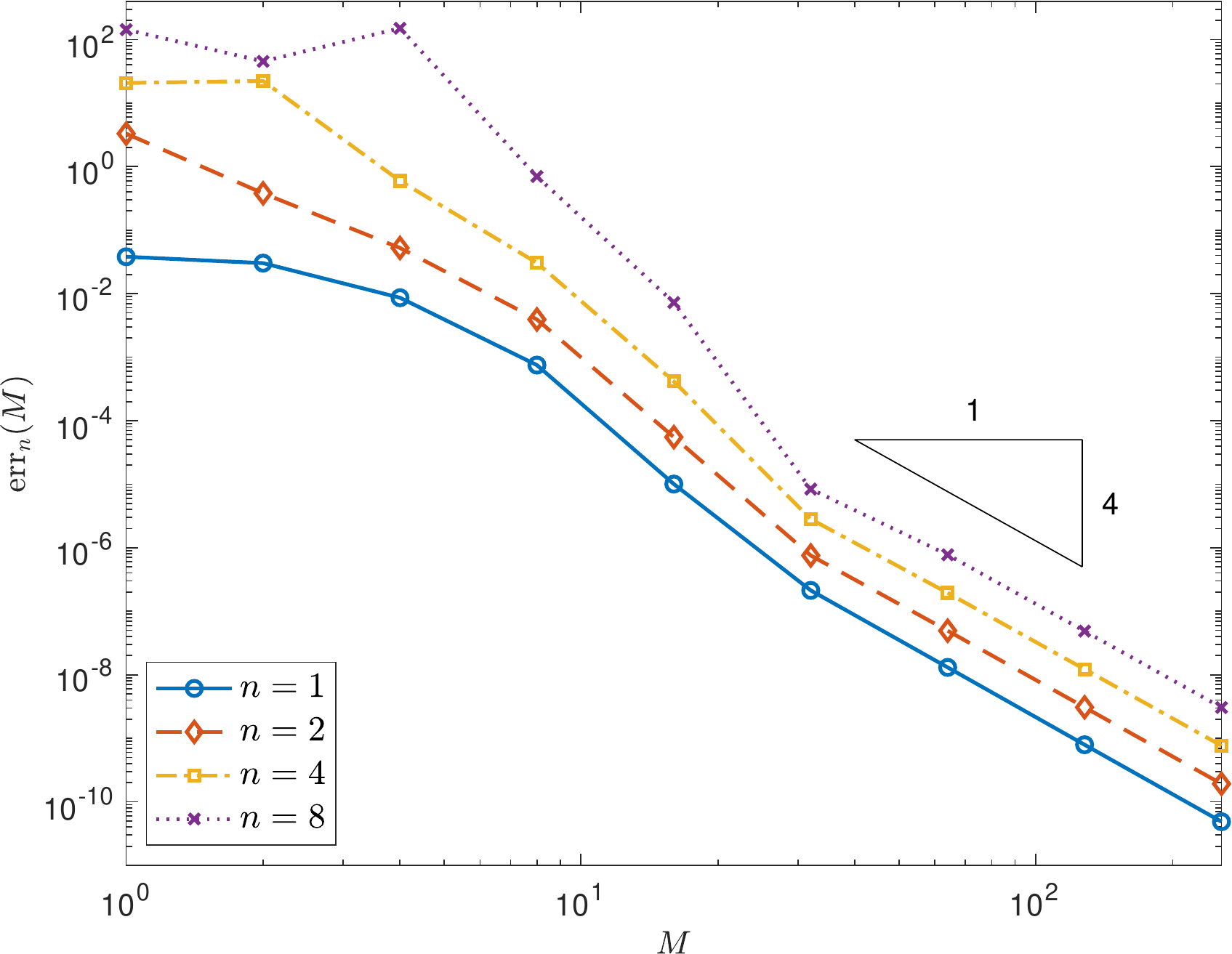}
  \caption{The relative $L^2(\partial D)$ discrepancy ${\rm err}_n(M)$ in \eqref{eq:relative_error} as a function of $M$ for $R=0.9$, $\kappa = 0.5$, and $z_{-M} = \cdots = z_M = 1$. Circles and solid blue line: $n=1$. Diamonds and dashed red line: $n=2$. Boxes and dash-dotted yellow line: $n=4$. Crosses and dotted magenta line: $n =8$. Left: electrode widths given by \eqref{eq:elec1}. Right:  electrode widths given by \eqref{eq:elec2}.} \label{fig:figure1}
\end{figure}

Figure~\ref{fig:figure1} validates \eqref{eq:hypothesis} numerically for a few Fourier basis functions $f_n$, $R = 0.9$, $\kappa = 0.5$, and the contact resistances set to $z=1$ on all electrodes; observe that we once again identify points on $\partial D$ with their polar angles. To be more precise, instead of exactly mimicking the left-hand side of \eqref{eq:hypothesis}, i.e.~including $\norm{f_n}_{H^s(\partial D)}$ in the denominator as in the definition of the operator norm, we plot the relative $L^2(\partial D)$ errors
\begin{equation} \label{eq:relative_error}
{\rm err}_n(M) := \frac{\norm{\bigl(\Upsilon(\varsigma) - \hat{\Upsilon}_{M}(\varsigma)\bigr) f_n}_{L^2(\partial D)}}{\norm{\Upsilon(\varsigma) f_n}_{L^2(\partial D)}}, \qquad n=1, 2, 4, 8,
\end{equation}
as functions of $M$ for both choices \eqref{eq:elec1} and \eqref{eq:elec2}. For a fixed $n$, ${\rm err}_n$ obviously satisfies a same type of an estimate as \eqref{eq:hypothesis}. Moreover, ${\rm err}_n$ is the inverse of the ratio between the desired output signal and the approximation error resulting from not being able to apply and measure continuum currents and potentials on $\partial D$, and thus it can be interpreted as a certain kind of a noise-to-signal ratio. According to Figure~\ref{fig:figure1}, the asymptotic convergence rates are as predicted by \eqref{eq:hypothesis} for both \eqref{eq:elec1} and \eqref{eq:elec2}; take note that infinitely small electrodes would actually result in  ${\rm err}_n$ converging faster than any negative power of $M$, that is, for a fixed spatial Fourier frequency the discrepancy between the CEM and the PEM can be considered to be the main source of asymptotic error in Figure~\ref{fig:figure1} (cf.~Example~\ref{ex:second}).  One needs about $80$ electrodes in the case of \eqref{eq:elec1} and about $33$ electrodes in the case of \eqref{eq:elec2} to reach noise-to-signal ratios that are less than one percent for all four considered current patterns and our particular choice of the conductivity profile. As it is expected that the constant appearing in \eqref{eq:hypothesis} is larger when $R$ is close to one, the chosen example illustrates a situation that is presumably difficult to resolve using electrode models.

Observe that it is easy to simulate $\Upsilon(\varsigma) f_n$ for any $n \in \Z\setminus \{0\}$ because $\{ f_n \}_{n \in \Z \setminus \{0\} }$ are precisely the eigenfunctions of $\Upsilon(\varsigma)$ with the corresponding eigenvalues
\begin{equation}
  \label{eq:eigenvalues}
\lambda_n = \frac{2}{|n|} \frac{ \tfrac{1-\kappa}{1+\kappa}R^{2|n|}}{1-\tfrac{1-\kappa}{1+\kappa}R^{2|n|}}, \qquad n \in \Z\setminus \{0\}.
\end{equation}
The approximating CEM-based outputs $\hat{\Upsilon}_{M}(\varsigma) f_n$, $n=1,2, 4, 8$, can be numerically computed by resorting to a variant of the Fourier-based numerical forward solver used in \cite{Hyvonen09}; see \cite{Somersalo1992} for the original ideas behind this approach. To be slightly more precise, the absolute CEM measurements corresponding to the homogeneous unit disk can be approximated exactly as in~\cite{Hyvonen09}, while those corresponding to the embedded concentric inhomogeneity require the modifications listed in \cite[Remark~6.1]{Hyvonen2009b} to the solver of \cite{Hyvonen09}, but with $\rho^{2|j|}$ replaced by $\tfrac{1-\kappa}{1+\kappa} R^{2|j|}$ at every occurrence in \cite[Remark~6.1]{Hyvonen2009b}.
\end{example}

\begin{example}
  \label{ex:second}
  We continue to assume the examined domain is the unit disk, but this time around the conductivity inside $D$ is characterized by a nonconcentric discoidal inclusion:
  \begin{equation}
    \label{eq:cond_noncon}
  \varsigma :=
\left\{
\begin{array}{ll}
  1 & \text{in } D \setminus \overline{D_{c,\rho}}, \\[1mm]
  \kappa & \text{in } D_{c,\rho}
\end{array}
\right.
\end{equation}
where $\kappa \in \R_+$ and $D_{c,\rho}$ is the open disk of radius $0 < \rho < 1 - |c|$ centered at $c \in D$. We choose $\kappa = 0.5$,  $c = -0.4+0.2\,{\rm i}$ and $\rho = 0.4$ in \eqref{eq:cond_noncon}, assume point electrodes are attached to $\partial D$ at the equiangular positions $x_m$, $|m| \leq M$, given by \eqref{eq:pointe_D}, and aim to verify the convergence rate in Corollary~\ref{cor:exponential}. The left-hand image of Figure~\ref{fig:figure2} shows the considered inclusion $D_{c, \rho}$ inside the unit disk as well as $2M+1$ equidistant point electrodes on $\partial D$ for $M=16$.

\begin{figure}[htb]
  \includegraphics[width=70mm]{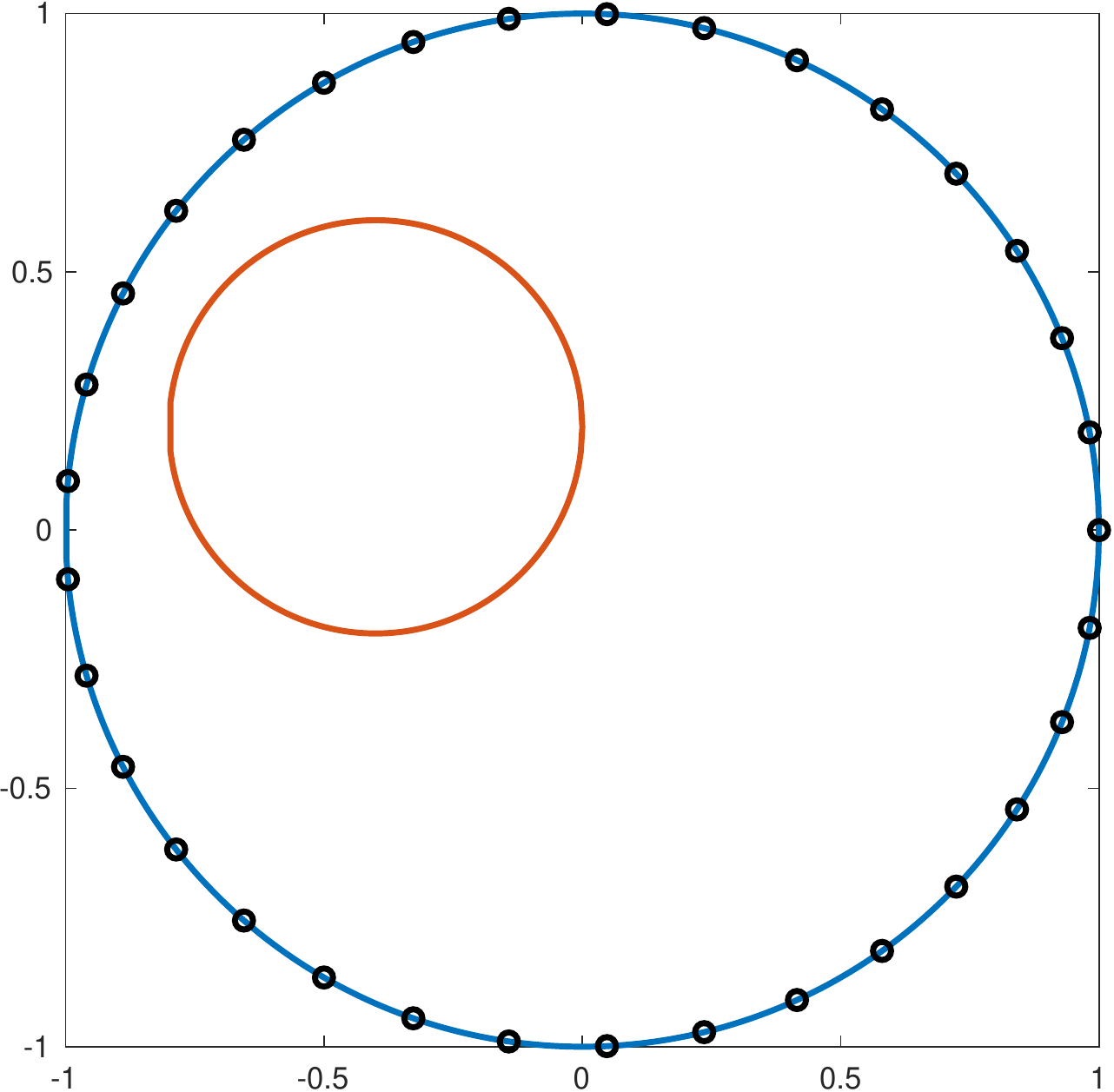} \qquad \includegraphics[width=70mm]{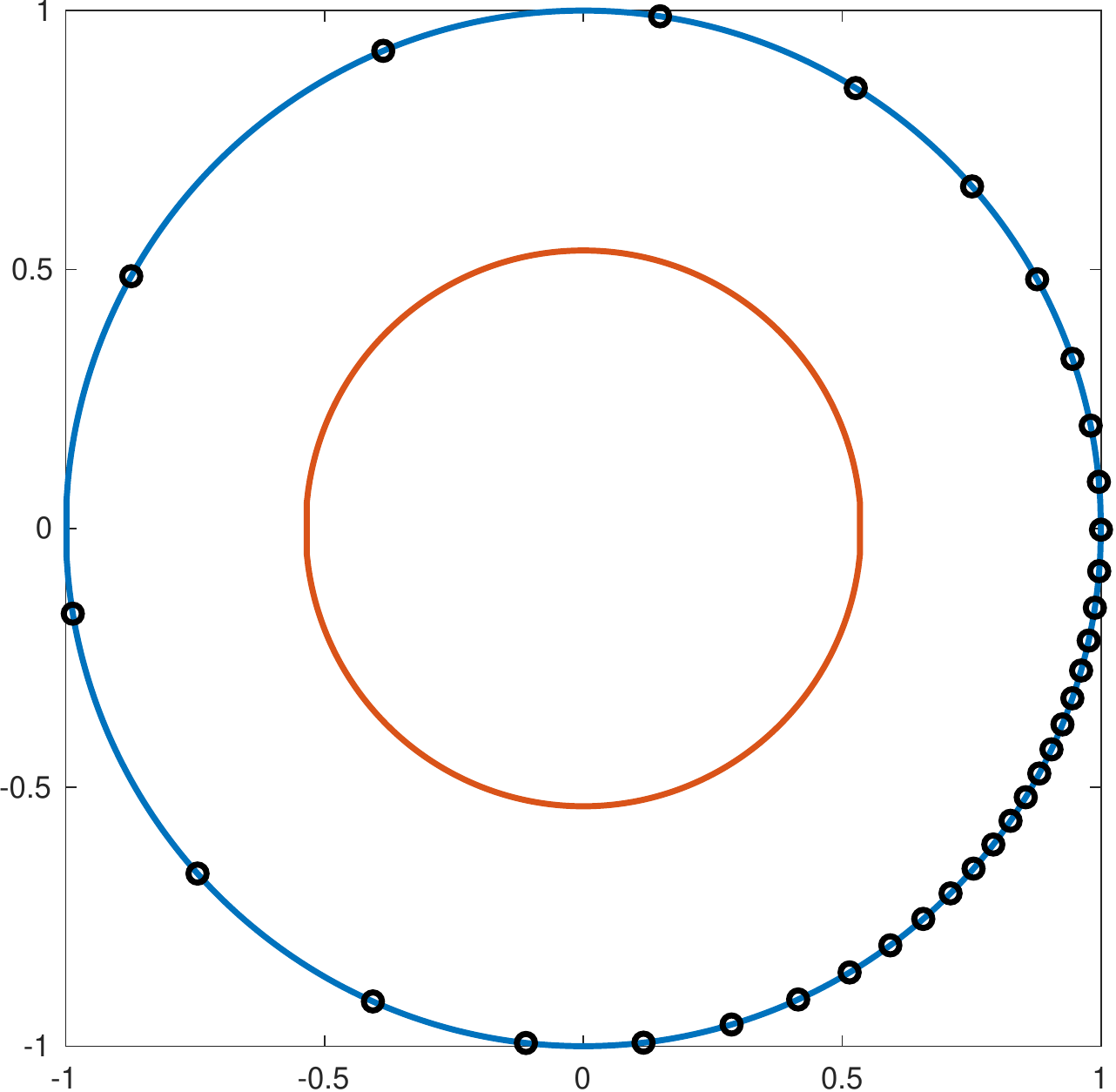}
  \caption{Left: the nonconcentric inclusion $D_{c, \rho}$ inside the unit disk with $2 M + 1$ equidistant point electrodes on $\partial D$ for $c = -0.4+0.2\,{\rm i}$, $\rho = 0.4$ and $M=16$. Right: a corresponding concentric geometry obtained as an image of the nonconcentric one under a suitable M\"obius transformation.} \label{fig:figure2}
\end{figure}

The top left image of Figure~\ref{fig:figure3} shows two mean free current patterns $g_j \in L^2_\diamond(\partial D)$, $j=1, 2$, as functions of the polar angle. The top right and bottom left images present the resulting relative continuum measurements $\Upsilon(\varsigma) g_j$, $j=1,2$, as well as the corresponding PEM-based approximations $\Upsilon_M(\varsigma) g_j$, $j=1,2$, for $M=4$ and $M=8$. The relative (mean free) potentials at the employed point electrodes are depicted along the respective curves obtained through trigonometric interpolation. Notice that the employed approximative boundary operator can be written directly as $\Upsilon_M(\varsigma) = P_\diamond Q_M \Upsilon(\varsigma) F_M G_M$, as in Corollary~\ref{cor:exponential}, since we are dealing with the unit disk and equiangular  point electrodes. For the less regular current density $g_1$, the approximation given by $\Upsilon_4(\varsigma)g_1$ is still quite bad and it exhibits oscillations typical for trigonometric interpolation, while $\Upsilon_8(\varsigma)g_1$ and $\Upsilon(\varsigma) g_1$ are already in a fairly good, yet not perfect agreement. On the other hand, although the approximation $\Upsilon_4(\varsigma)g_2 \approx \Upsilon(\varsigma) g_2$ for the shifted low Fourier mode $g_2$ still leaves a lot to hope for, the 17 point electrodes corresponding to $M=8$ seem to already provide a sufficient approximation since differentiating between $\Upsilon_8(\varsigma)g_2$ and $\Upsilon(\varsigma) g_2$ by a naked eye is almost impossible.

\begin{figure}[htb]
  \includegraphics[width=70mm]{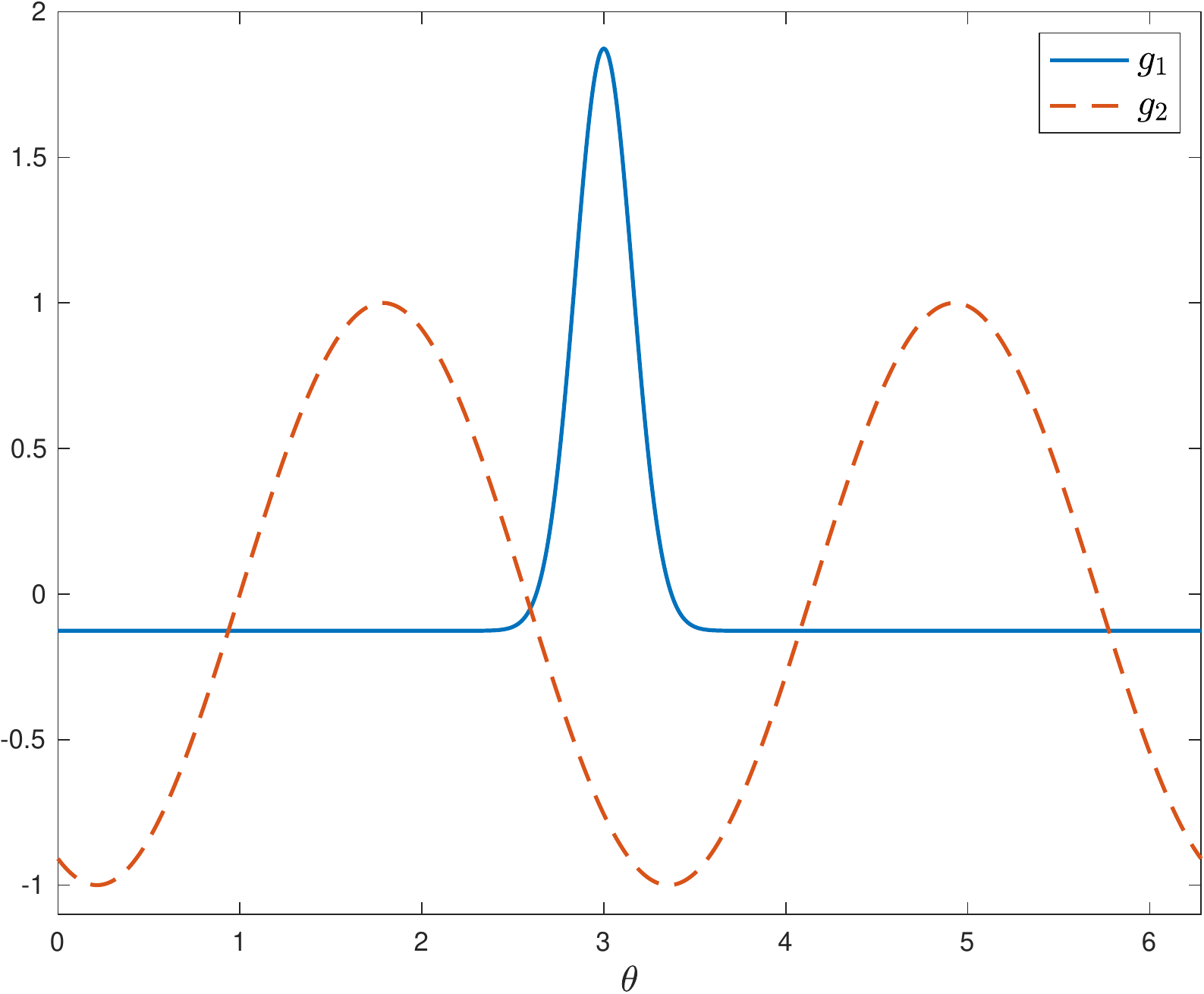} \qquad \includegraphics[width=70mm]{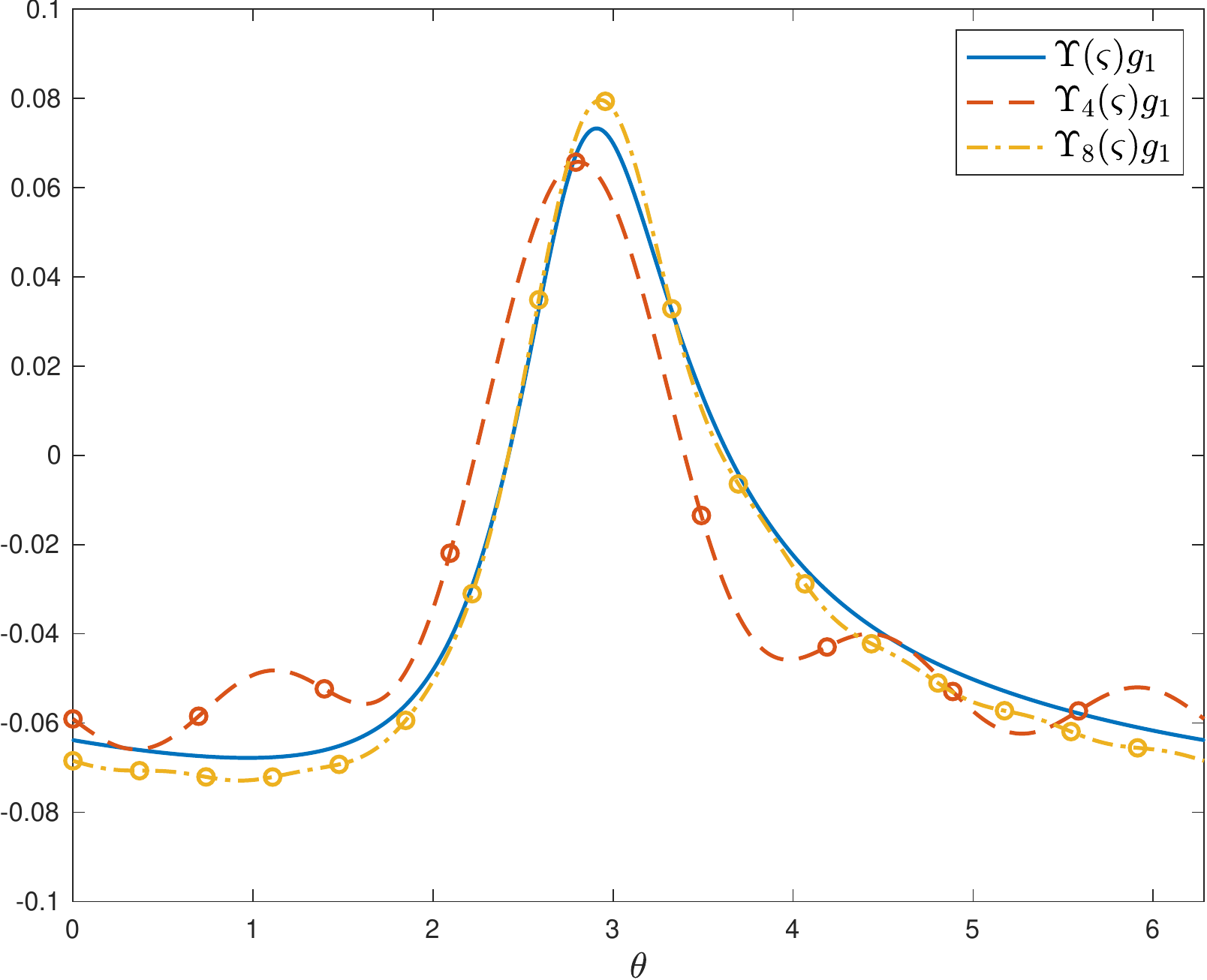} \\[3mm]
  \includegraphics[width=70mm]{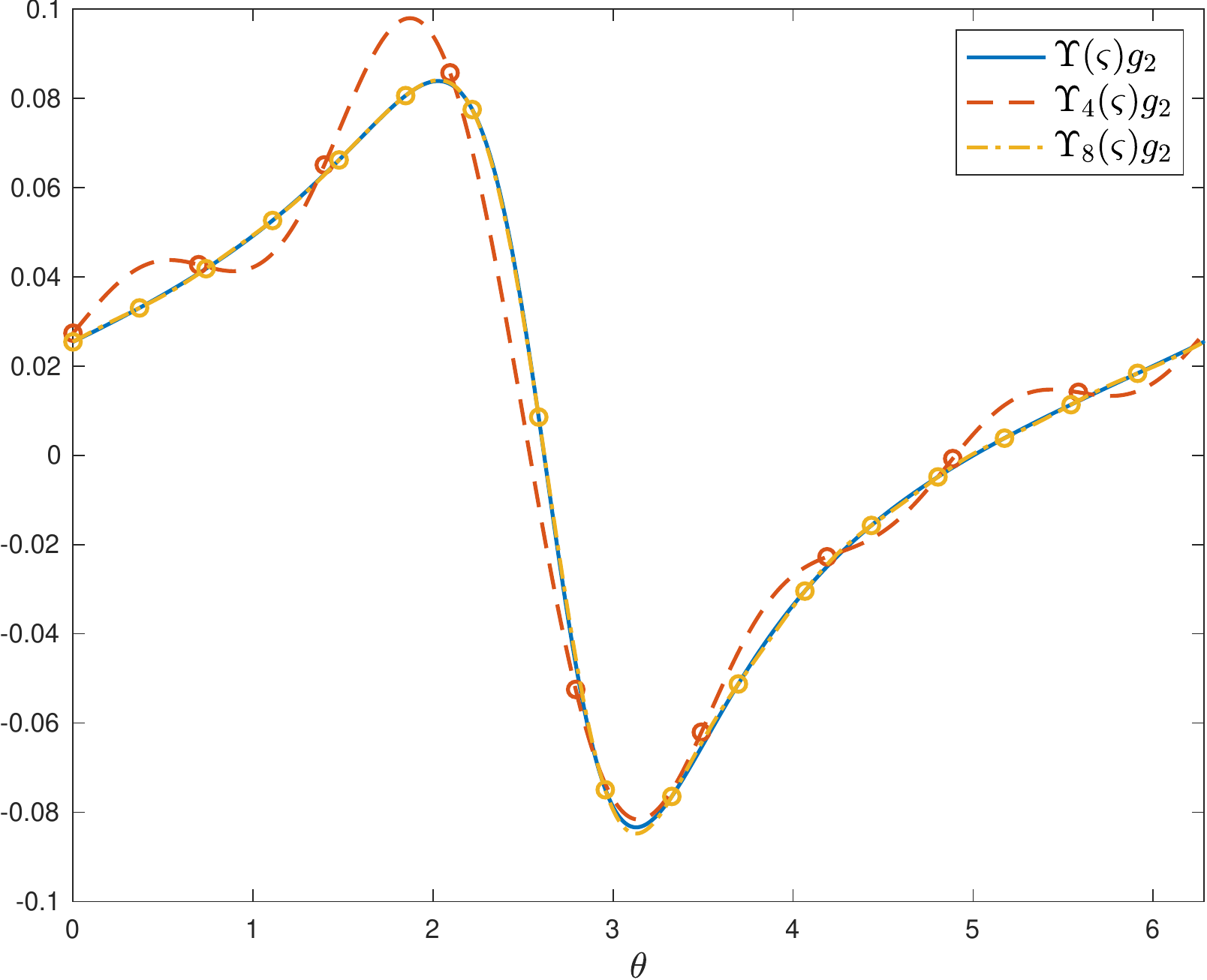} \qquad \includegraphics[width=71mm]{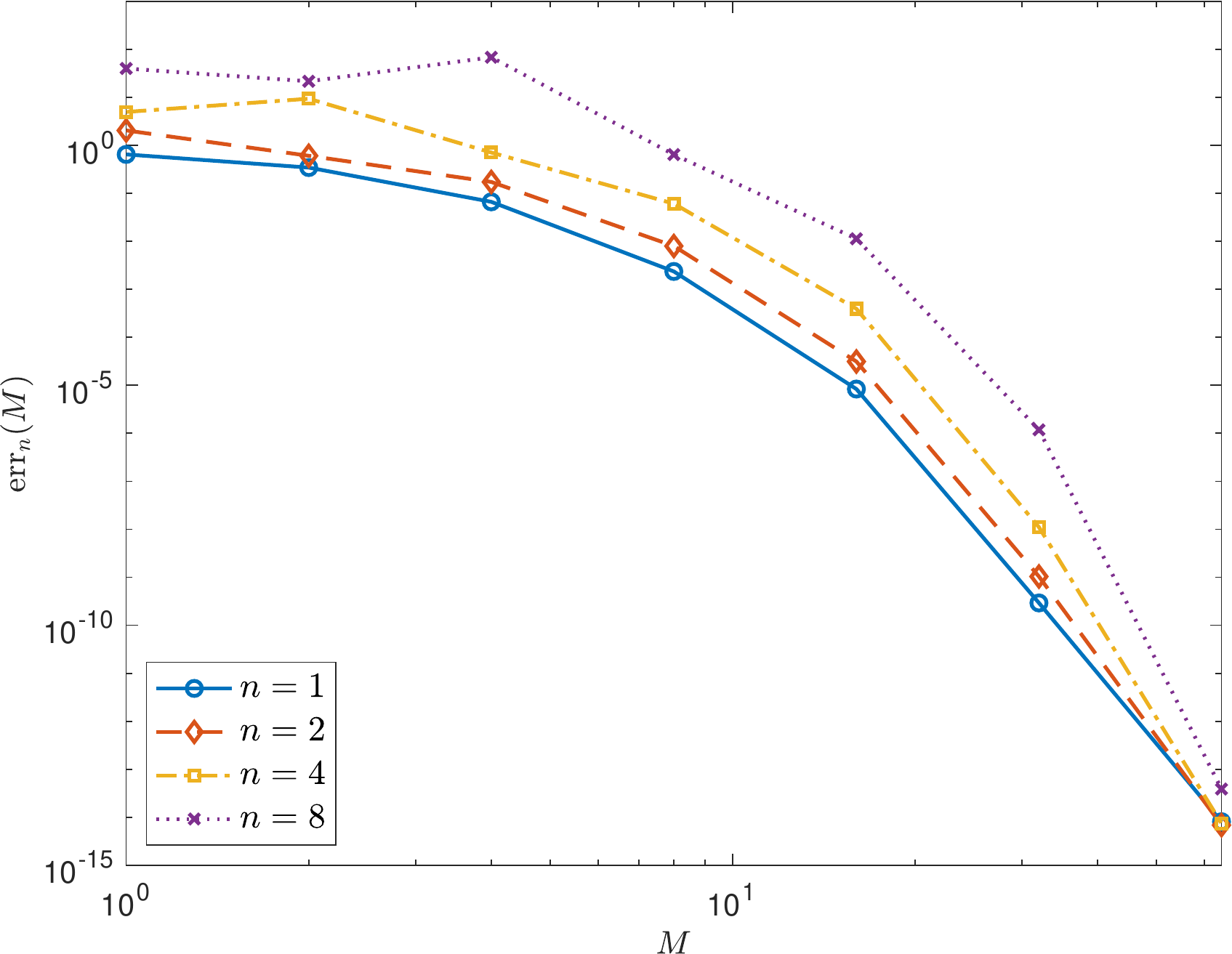}
  \caption{The considered conductivity is $\varsigma$ from \eqref{eq:cond_noncon} with $\kappa = 0.5$, $c = -0.4 + 0.2 \, {\rm i}$ and $\rho =0.4$. Top left: two mean free current densities $g_j$, $j=1,2$, as functions of the polar angle. Top right ($j=1$) and bottom left ($j=2$): the relative continuum measurement $\Upsilon(\varsigma) g_j$ (solid) and the corresponding PEM-based approximation $\Upsilon_M(\varsigma) g_j$ for $M=4$ (dashed) and $M=8$ (dash-dotted). Bottom right: the relative $L^2(\partial D)$ discrepancy ${\rm err}_n(M)$ from \eqref{eq:relative_error}, with $\hat{\Upsilon}_M(\varsigma)$ replaced by $\Upsilon_M(\varsigma)$, as a function of $M$. Circles and solid blue line: $n=1$. Diamonds and dashed red line: $n=2$. Boxes and dash-dotted yellow line: $n=4$. Crosses and dotted magenta line: $n =8$.} \label{fig:figure3}
\end{figure}

Let us then numerically investigate the actual convergence rate in Corollary~\ref{cor:exponential}. To this end, we redefine the relative $L^2(\partial D)$ error ${\rm err}_n(M)$, originally introduced in \eqref{eq:relative_error}, by replacing $\hat{\Upsilon}_M(\varsigma)$ with $\Upsilon_M(\varsigma)$ and using the conductivity $\varsigma$ from \eqref{eq:cond_noncon}, as we consider approximations based on the PEM, not the CEM, and a nonconcentric geometry in this example. The bottom right image of Figure~\ref{fig:figure3} shows ${\rm err}_n(M)$ as a function of $M$ for $n=1, 2, 4, 8$. The concave shape of the convergence plots on a log-log scale is in line with the prediction of Corollary~\ref{cor:exponential} that the convergence rate in $L^2(\partial \Omega)$ is faster than any negative power of $M$ for a fixed Fourier frequency. Note that the maximum number of electrodes in the bottom right image of Figure~\ref{fig:figure3} is 129 corresponding to $M=64$, which is only about one quarter of the number of electrodes used in Figure~\ref{fig:figure1} for the finite-sized electrode of CEM. This is due to the extremely fast convergence of the relative $L^2(\partial D)$ error ${\rm err}_n(M)$ for the PEM: at $M=64$ the relative error is already less than $10^{-13}$ for all four considered Fourier frequencies.

We conclude this example by briefly explaining how (relative) CM and PEM measurements were simulated with high accuracy in this setting by combining the Riemann mapping theorem for doubly-connected domains with the eigendecomposition of the relative continuum ND map characterized by \eqref{eq:eigenvalues} in a concentric geometry: There exist a conformal mapping, or more precisely a M\"obius transformation $\Xi: D \to D$ that sends $D$ onto itself and $D_{c,\rho}$ onto a concentric open disk $D_{0,R}$, with $0 < R = R(|c|,\rho)<1$ uniquely determined by $|c|$ and $\rho$; see,~e.g.,~\cite{Schinzinger2003}. This M\"obius transformation extends conformally to an open neighborhood of $\overline{D}$ and it is unique up to rotations of its image. Let us denote the inverse of $\Xi$ by $\Theta$. If $u\in H^1(D)$ satisfies the conductivity equation for the conductivity \eqref{eq:cond_noncon} with the Neumann trace $f \in L^2_\diamond(\partial D)$, then $\tilde{u} := u \circ \Theta \in H^1(D)$ also satisfies the conductivity equation but for the current density $|\Theta'| (f \circ \Theta)|_{\partial D} \in L^2_\diamond(\partial D)$ and the radially symmetric conductivity \eqref{eq:cond_con} with the above introduced $R = R(|c|,\rho)$; see \eqref{eq:PDE_D2} and \cite[Lemma~4.1 and Remark~4.1]{Hyvonen2018}. If $D$ is characterized by the unit conductivity instead of \eqref{eq:cond_noncon}, these conclusions remain valid with the exception that the conductivity does not alter when mapped conformally. As a consequence, one can numerically simulate the relative CM measurement for a given current density $f \in L^2_\diamond(\partial D)$ and the conductivity \eqref{eq:cond_noncon} by first presenting $|\Theta'| (f \circ \Theta)|_{\partial D}$ as a Fourier series, then utilizing \eqref{eq:eigenvalues} to obtain the corresponding relative measurement in the concentric geometry, and finally mapping the relative potential back to the original nonconcentric geometry by composing it with $\Xi|_{\partial D}$.

The simulation of relative PEM data can be performed following the same basic idea, but bearing in mind that Neumann conditions involving Dirac delta distributions transfer naturally under conformal mappings. That is, one need not involve the multiplier $|\Theta'|$ in the boundary current density when transferring it to the concentric geometry but only to move the point electrodes as dictated by the boundary restriction of the conformal mapping; see~\eqref{eq:PDE_O}, \eqref{eq:PDE_D3} and \cite[Theorem~3.2]{Hakula2011}. Because the Fourier coefficients for a linear combination of Dirac delta distributions are obtained via \eqref{eq:F_delta}, one can readily apply \eqref{eq:eigenvalues} to obtain the corresponding relative potential for the concentric conductivity \eqref{eq:cond_con} on $\partial D$. Evaluating this potential at the images of the original point electrodes under $\Xi|_{\partial D}$ finally produces the sought-for PEM data. The claimed accuracy of such a numerical approach to simulating relative PEM data is explained by the geometric convergence of the eigenvalues in \eqref{eq:eigenvalues}. The right-hand image of Figure~\ref{fig:figure2} shows a concentric geometry corresponding to the nonconcentric one in the left-hand image, obtained with the help of a certain M\"obius transformation $\Xi$ carrying the above listed properties.

\end{example}

\section{Concluding remarks}

In three dimensions, one cannot straightforwardly first introduce the above theory for a reference domain, such as the unit disk in two dimensions, and then transfer the obtained estimates to more general domains with the help of suitable diffeomorphisms. However, given a smooth and bounded three-dimensional domain and an interpolation strategy on its boundary, the above line of reasoning can be repeated as such to obtain estimates of the same order as is the accuracy of the employed interpolation operator (and the related quadrature rule). Choosing optimal electrode positions in three dimensions thus boils down to introducing accurate quadrature and interpolation rules for a given two-dimensional surface. As an example, one could use interpolation by spherical harmonics with respect to some suitable set of point electrodes on the unit sphere; see,~e.g.,~\cite{Delbary2014}. We leave further analysis of the three-dimensional setting for future studies.

Observe that the results on the discrepancy between $\Upsilon(\sigma)$ and $\Upsilon_M(\sigma)$ or $\hat{\Upsilon}_M(\sigma)$ presented in this work can be interpreted as estimates on the (numerical approximation) noise introduced when relative CM measurements of EIT are approximated by practical electrode measurements. Hence, replacing $\Upsilon(\sigma)$ by $\Upsilon_M(\sigma)$ or $\hat{\Upsilon}_M(\sigma)$ in a reconstruction algorithm designed for relative CM measurements of EIT can be expected to have an approximately similar effect on the reconstruction quality as adding an equivalent amount of artificial measurement noise to simulated relative CM measurements. One can thus combine the above results with analysis on the noise sensitivity of reconstruction algorithms designed for relative CM measurements presented in previous works to approximately deduce how many (optimally positioned) electrodes need to be introduce on the object boundary to make the approximation noise level low enough to allow such algorithms to function satisfactorily.

\subsection*{Acknowledgments} 

This work was supported by the Academy of Finland (decision 312124) and the Aalto Science Institute (AScI).

\appendix

\section{On the discrepancy between the CEM and the PEM}

The purpose of this appendix is to show that the constant appearing on the right-hand side of the estimate in \cite[Corollary~2.1]{Hanke2011b} depends on the number of electrodes as $\mathcal{O}(M)$ when $M$ tends to infinity. This result is used for the proof of Corollary~\ref{cor:main}. We achieve this by carefully analyzing the relevant intermediate steps in \cite{Hanke2011b}.
The following theorem is simply a two-dimensional version of \cite[Corollary~2.1]{Hanke2011b} with our notation and the dependence on $M$ given explicitly; to be quite precise, in  \cite{Hanke2011b} the number of electrodes is $M$, not $2M+1$ as in our case, but this obviously has no effect on the asymptotic form of the actual estimate. 

\begin{theorem}
  \label{thm:M-point}
  Let the assumptions of Section~\ref{sec:general_domain} hold. In particular, the finite-sized electrodes of CEM are assumed to satisfy \eqref{eq:el_length} with $d_0 = d_0(M,\Omega)>0$ that guarantees the electrodes do not overlap.  Then,
  \begin{equation*}
  	\norm{\Upsilon_{\rm CEM} (\sigma) - \Upsilon_{\rm PEM} (\sigma)}_{\mathscr{L}(\C_\diamond^{2M+1})} \leq C M d^2,	
  \end{equation*}
  where $\C^{2M+1}$ is equipped with the Euclidean norm and $C = C(\Omega, \sigma, z)> 0$ is independent of $M \in \N$ and $d \in (0, d_0)$.
\end{theorem}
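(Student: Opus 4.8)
The plan is to revisit the chain of estimates in \cite{Hanke2011b} leading to \cite[Corollary~2.1]{Hanke2011b} and to keep track, step by step, of how each constant depends on the number of electrodes $2M+1$. The point is that almost every ingredient of that argument is insensitive to $M$ --- the operator norms of the relative ND map $\Upsilon(\sigma)$ between arbitrary Sobolev scales (finite by \eqref{eq:Tbound}), the Sobolev embedding and trace constants on $\partial\Omega$, the geometric constants of $\Omega$, and, under the uniformity assumption \eqref{eq:el_length}, the well-posedness constant of the CEM forward problem --- and that the factor $M$ in the claimed estimate arises from exactly two sources: reassembling $2M+1$ per-electrode local errors into a single Euclidean-norm estimate on $\C_\diamond^{2M+1}$, and converting the current vector $I\in\C_\diamond^{2M+1}$ into a weak Sobolev norm of the associated Dirac comb $f_I$ from \eqref{eq:f_I}.

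First I would isolate the local estimate at the heart of \cite{Hanke2011b}. Integrating the CEM conditions \eqref{cemeqs} over each patch $E_m$ shows that the \emph{relative} CEM electrode voltage $(\Upsilon_{\rm CEM}(\sigma)I)_m$ equals exactly the $E_m$-average of the relative CEM boundary potential $u_\sigma-u_1$, the contact-impedance terms cancelling because both conductivities carry the same net current $I_m$ through $E_m$. Subtracting $(\Upsilon_{\rm PEM}(\sigma)I)_m$, which by \eqref{eq:Upsilon} is the value $(\Upsilon(\sigma)f_I)(y_m)$ up to an additive constant independent of $m$, splits the per-electrode discrepancy into (a) the difference between the relative CEM and the relative CM potentials near $\partial\Omega$, which is the genuinely analytic part handled in \cite{Hanke2011b}, and (b) the difference between the $E_m$-average and the pointwise value at the \emph{midpoint} $y_m$ of $\Upsilon(\sigma)f_I$, which is $\mathcal{O}(d^2)$ by Taylor expansion (the midpoint choice annihilating the first-order term). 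Since $\suppm(\sigma-1)\subset\subset\Omega$, the relevant relative potentials are harmonic, hence smooth, in a fixed interior neighbourhood of $\partial\Omega$ whose width depends only on $\Omega$ and $\sigma$, and \eqref{eq:Tbound} together with the Sobolev embedding bounds their $C^2$-norms by $C(\Omega,\sigma)\norm{f_I}_{H^{-N}(\partial\Omega)}$ for any fixed $N>\tfrac{1}{2}$; hence each electrode contributes a discrepancy of size at most $Cd^2\norm{f_I}_{H^{-N}(\partial\Omega)}$, with $C=C(\Omega,\sigma,z)$. The substance of the appendix is to verify that every constant appearing in this chain --- in particular those in the CEM-versus-CM comparison --- is independent of $M$ once \eqref{eq:el_length} is assumed.

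Then I would assemble the operator bound. Writing $e:=(\Upsilon_{\rm CEM}(\sigma)-\Upsilon_{\rm PEM}(\sigma))I\in\C_\diamond^{2M+1}$, the previous step yields $\abs{e_m}\le Cd^2\norm{f_I}_{H^{-N}(\partial\Omega)}$ for every $\abs{m}\le M$, whence $\abs{e}\le\sqrt{2M+1}\,Cd^2\norm{f_I}_{H^{-N}(\partial\Omega)}$. It remains to control $\norm{f_I}_{H^{-N}(\partial\Omega)}=\bigl\|\sum_{\abs{m}\le M}I_m\delta_{y_m}\bigr\|_{H^{-N}(\partial\Omega)}$ by $\abs{I}$: since $\norm{\delta_{y_m}}_{H^{-N}(\partial\Omega)}\le c_N$ uniformly in $m$ (immediate on $\partial D$ from \eqref{eq:F_delta} and \eqref{eq:s_norm}, and on a general $\partial\Omega$ from the conformal change of variables of Section~\ref{sec:general_domain}), the triangle and Cauchy--Schwarz inequalities give $\norm{f_I}_{H^{-N}(\partial\Omega)}\le c_N\sum_{\abs{m}\le M}\abs{I_m}\le c_N\sqrt{2M+1}\,\abs{I}$. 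Combining the two displays produces $\abs{e}\le C(2M+1)d^2\abs{I}$, that is, $\norm{\Upsilon_{\rm CEM}(\sigma)-\Upsilon_{\rm PEM}(\sigma)}_{\mathscr{L}(\C_\diamond^{2M+1})}\le CMd^2$ with $C=C(\Omega,\sigma,z)$, as claimed.

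The main obstacle is bookkeeping rather than new analysis: \cite{Hanke2011b} states its estimate for a fixed measurement configuration and never isolates the dependence on the number of electrodes, so the work lies in propagating the $M$-dependence through the intermediate steps --- especially the comparison between the CEM and an averaged (shunt-type) model and the well-posedness of the CEM forward problem --- and confirming that the only surviving $M$-dependent factors are the two $\sqrt{2M+1}$'s above. These are moreover essentially optimal: already for the mean-free current vector $I$ with $I_m=x_m$, $\abs{m}\le M$, one has $\abs{I}=\sqrt{2M+1}$ while $f_I=\sum_{\abs{m}\le M}x_m\delta_{x_m}$ has, by \eqref{eq:expsum}, $H^{-N}(\partial D)$-norm of order $2M+1$, so the $\mathcal{O}(Md^2)$ rate cannot be improved by this argument --- and it is exactly what is needed for \eqref{eq:final_estim2} in the proof of Corollary~\ref{cor:main}.
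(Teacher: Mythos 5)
Your overall plan --- rerun the chain of estimates in \cite{Hanke2011b} and track how every constant depends on the number of electrodes --- is precisely what the paper's appendix does, and your identification of the final bound as the product of two $\sqrt{M}$-type factors originating in $\ell^1$-versus-$\ell^2$ conversions on $\C^{2M+1}$ is the right structural insight; the near-optimality remark is also sound.

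The concrete decomposition you interpose, however, is not justified and does not match how \cite{Hanke2011b} (or the appendix) actually proceeds. You reduce everything to a per-electrode bound $\abs{e_m}\le C d^2\norm{f_I}_{H^{-N}(\partial\Omega)}$ with $C$ independent of $M$, asserting that ``every constant appearing in this chain --- in particular those in the CEM-versus-CM comparison --- is independent of $M$''. Part (b) of your splitting (midpoint versus electrode average of the smooth relative CM potential) does admit such a bound via \eqref{eq:Tbound}. But part (a), the CEM-versus-PEM comparison, is established in \cite{Hanke2011b} only through \emph{global} estimates: a weak-norm bound on the Neumann-data mismatch summed over all electrodes, an $H^1(\Omega_0)$ bound on $u^h-u$ on an interior subdomain, and a duality step recovering the electrode voltages. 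The constants in those estimates are \emph{not} $M$-independent --- the appendix shows that \cite[Lemma~2.1]{Hanke2011b} and \cite[Lemma~3.2]{Hanke2011b} each acquire a factor $M^{1/2}$ (from bounding $\ell^1$-sums of per-electrode quantities by $M^{1/2}$ times the corresponding $\ell^2$-norms of $I$), and the final $\mathcal{O}(M)$ arises as the product of two such factors in the last step of the proof of \cite[Theorem~2.1]{Hanke2011b}, not from an $\ell^\infty\!\to\ell^2$ assembly of output errors. There is no per-electrode, $M$-uniform version of part (a) available to cite, so that step of your argument would require a genuinely local reworking of \cite{Hanke2011b} that you do not supply. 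Your closing paragraph concedes that the real content is the propagation of $M$ through the intermediate lemmas; the paper's proof consists exactly of carrying that out, and what it finds contradicts your intermediate claim of $M$-independence even though the final exponent agrees.
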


\begin{proof}
  Let us re-emphasize that the assertion is precisely the two-dimensional version of \cite[Corollary~2.1]{Hanke2011b} using our current notation and with the dependence of the right-hand side on the number of electrodes written out explicitly. In the rest of this proof, we will follow the notation of \cite{Hanke2011b} to which we also refer for more information; in particular, the number of electrodes is $M \in \N \setminus \{1\}$, the parameter controlling the width of the electrodes is $h$ instead of $d$, and $n=2$ or $3$ is the spatial dimension. The idea is to simply go through the relevant estimates in \cite{Hanke2011b} and deduce the dependence of the appearing constants on $M$.

  Let us start with \cite[Lemma~2.1]{Hanke2011b}. It is easy to check that the right-most constant in \cite[(2.9)]{Hanke2011b} depends on $M$ as $\mathcal{O}(M^{1/2})$ due to the relationship between 1 and 2-norms in the $M$-dimensional vector space $\C^M$; observe that the vector norm $\| \cdot \|_{\C^M}$ used for $\C^M$ in \cite{Hanke2011b} is Euclidean, although this choice plays no role in the analysis of \cite{Hanke2011b} since the limit $M\to \infty$ is not considered there, and thus all vector norms are `uniformly equivalent' in \cite{Hanke2011b}. The aforementioned  dependence on $M$ carries over to the estimate of \cite[Lemma~2.1]{Hanke2011b}, which now reads
  \begin{equation}\label{eq:appendix0}
  	\norm{w}_{H^r(\partial \Omega)/ \C} \leq C M^{1/2} \norm{I}_{\C^M},
  \end{equation}
  with the constant $C = C(\Omega, \sigma, r) > 0$ being independent of $M$.

Because the constant  in \cite[Lemma~3.1]{Hanke2011b} is completely independent of the electrode configuration, the next result we need to check is \cite[Lemma~3.2]{Hanke2011b}. We start by analyzing the two terms on the right-hand side of \cite[(3.2)]{Hanke2011b}, starting with the first one:
\begin{align*}
  \sum_{m=1}^M  \Big\| f^h - \frac{I_m}{|e^h_m|} \Big\|_{L^2(e^h_m)}  \| \varphi - \varphi(x_m) \|_{L^2(e^h_m)} & \leq C h^{(n+3)/2} \| \varphi \|_{C^1(\partial \Omega)} \sum_{m=1}^M \| f^h \|_{H^1(e^h_m)} \\
  &\leq C M^{1/2} h^{(n+3)/2}   \| \varphi \|_{C^1(\partial \Omega)} \| f^h \|_{H^1(\cup e^h_m)},
\end{align*}
where we first used \cite[(3.4)]{Hanke2011b} together with the first inequality in \cite[(3.3)]{Hanke2011b}, and then introduced $M^{1/2}$ to bound a certain $1$-norm by the corresponding $2$-norm. In consequence,
\begin{align}
  \label{eq:appendix1}
  \sum_{m=1}^M \Big\| f^h - \frac{I_m}{|e^h_m|} \Big\|_{L^2(e^h_m)} \| \varphi - \varphi(x_m) \|_{L^2(e^h_m)} \leq C M^{1/2}  h^{2} \| \varphi \|_{C^1(\partial \Omega)} \| I \|_{\C^M}
\end{align}
due to \cite[Lemma~3.1]{Hanke2011b} and the assumed dependence of $|e^h_m|$ on the parameter $h$ controlling the size of the electrodes (cf.~ \cite[(2.15)]{Hanke2011b} and \eqref{eq:el_length}). On the other hand, the second term on the right-hand side of \cite[(3.2)]{Hanke2011b} can be estimated as
\begin{equation}
   \label{eq:appendix2}
 \sum_{m=1}^M \frac{|I_M|}{|e^h_m|} \, \Big| \int_{e^h_m}( \varphi - \varphi(x_m)) \, {\rm d} S \Big| \leq C h^{n+1} \| \varphi \|_{C^2(\partial \Omega)} \sum_{m=1}^M \frac{|I_m|}{|e^h_m|} \leq C M^{1/2} h^2  \| \varphi \|_{C^2(\partial \Omega)} \|I \|_{\C^M},
\end{equation}
where the first step directly follows from \cite[(3.6)]{Hanke2011b}  and the second one is a consequence of the assumed dependence of $|e^h_m|$ on $h$ as well as the relationship between $1$ and $2$-norms. By plugging \eqref{eq:appendix1} and \eqref{eq:appendix2} in \cite[(3.2)]{Hanke2011b} and utilizing the resulting estimate in part (5) of the proof of \cite[Lemma~3.2]{Hanke2011b}, one finally sees that the $M$-dependent version of the estimate in \cite[Lemma~3.2]{Hanke2011b} reads
\begin{equation}
  \label{eq:appendix3}
\| \nu \cdot \sigma \nabla (u^h - u) \|_{H^{-(n+3)/2 - \varepsilon}(\partial \Omega)} \leq C_{\varepsilon} M^{1/2} h^2 \|I \|_{\C^M},
\end{equation}
where the assumptions and definitions are as in \cite[Lemma~3.2]{Hanke2011b}.

Next we need to consider  \cite[Corollary~3.1]{Hanke2011b}. The $M$-dependent estimate of \cite[Lemma~3.2]{Hanke2011b}, i.e.~\eqref{eq:appendix3}, immediately leads to the constant on the right-hand side of \cite[(3.7)]{Hanke2011b} depending on $M$ as $\mathcal{O}(M^{1/2})$. This carries directly over to the first estimate of \cite[Corollary~3.1]{Hanke2011b} that now reads
\begin{equation}
  \label{eq:appendix4}
\| u^h - u \|_{H^1(\Omega_0)/\C} \leq C M^{1/2} h^2 \| I \|_{\C^M}.
\end{equation}
Similarly, the second estimate of \cite[Corollary~3.1]{Hanke2011b} takes the form
\begin{equation}
  \label{eq:appendix5}
\| u^h \|_{H^1(\Omega_0)/\C} + \| u \|_{H^1(\Omega_0)/\C} \leq C M^{1/2} \| I \|_{\C^M},
\end{equation}
when the corresponding argument in the proof of \cite[Corollary~3.1]{Hanke2011b} is repeated with the $M$-dependent version of \cite[(2.9)]{Hanke2011b} that includes the term $M^{1/2}$ on its right-hand side as mentioned above.

The proof for the $M$-dependent version of \cite[Corollary~2.1]{Hanke2011b} can now be completed by using \eqref{eq:appendix4} and \eqref{eq:appendix5} for the final estimate in the proof of \cite[Theorem~2.1]{Hanke2011b}, leading to the dependence $\mathcal{O}(M^{1/2}) \cdot \mathcal{O}(M^{1/2}) = \mathcal{O}(M)$ on the right-hand side. To be quite precise, this proves an $M$-dependent version of \cite[Theorem~2.1]{Hanke2011b}, that is,
$$
\| (U^h - U^h_0) - W \|_{\C^M/\C} \leq C M h^2 \| I \|_{\C^M},
$$
where $C = C(\Omega, \sigma, z)>0$ is independent of $M$ and $h \in (0, h_0)$ for some $h_0 > 0$. However, \cite[Corollary~2.1]{Hanke2011b} is just a restatement of \cite[Theorem~2.1]{Hanke2011b} using the appropriate operator norm. Finally, translating the $M$-dependent \cite[Corollary~2.1]{Hanke2011b} into the notation of this paper completes the proof.
\end{proof}

\bibliographystyle{plain}
\bibliography{minbib}

\end{document}